\numberwithin{equation}{section}
\newcommand{\Sp}{\operatorname{Sp}}
\renewcommand\a{\alpha}
\renewcommand\b{\beta}
\def\l{\lambda}
\def\eps{\varepsilon }
\renewcommand\a{\alpha}
\renewcommand\b{\beta}
\newcommand\R{\mathbb R}
\def\eps{\varepsilon}
\def\l{\lambda}
\newcommand\br{\begin{remark}}
\newcommand\er{\end{remark}}
\newcommand\brs{\begin{remarks}}
\newcommand\ers{\end{remarks}}
\newcommand\bp{\begin{pmatrix}}
\newcommand\ep{\end{pmatrix}}
\newcommand{\be}{\begin{equation}}
\newcommand{\ee}{\end{equation}}
\newcommand\ba{\begin{equation}\begin{aligned}}
\newcommand\ea{\end{aligned}\end{equation}}
\newcommand\ds{\displaystyle}
\newcommand{\bap}{\begin{app}}
\newcommand{\eap}{\end{app}}
\newcommand{\begs}{\begin{exams}}
\newcommand{\eegs}{\end{exams}}
\newcommand{\beg}{\begin{example}}
\newcommand{\eeg}{\end{exaplem}}
\newcommand{\bpr}{\begin{proposition}}
\newcommand{\epr}{\end{proposition}}
\newcommand{\bt}{\begin{theorem}}
\newcommand{\et}{\end{theorem}}
\newcommand{\bc}{\begin{corollary}}
\newcommand{\ec}{\end{corollary}}
\newcommand{\bl}{\begin{lemma}}
\newcommand{\el}{\end{lemma}}
\newcommand{\bd}{\begin{definition}}
\newcommand{\ed}{\end{definition}}
\newcommand{\B }{\mathcal{B}}
\newcommand{\RR}{{\mathbb R}}
\newcommand{\ZZ}{{\mathbb Z}}
\newtheorem{theorem}{Theorem}[section]
\newtheorem{proposition}[theorem]{Proposition}
\newtheorem{corollary}[theorem]{Corollary}
\newtheorem{lemma}[theorem]{Lemma}
\theoremstyle{remark}
\newtheorem{remark}[theorem]{Remark}
\newtheorem{remarks}[theorem]{Remarks}
\theoremstyle{definition}
\newtheorem{definition}[theorem]{Definition}
\newtheorem{example}[theorem]{Example}
\newcommand{\lb}{\label}
\newcommand{\ran}{\text{\rm{ran}}}
\newcommand{\dom}{\text{\rm{dom}}}
\newcommand{\beq}{\begin{equation}}
\newcommand{\eeq}{\end{equation}}
\title{
Diffusive stability of spatially periodic patterns with a conservation law
}
\author{Alim Sukhtayev}
\address{Indiana University, Bloomington}
\email{alimsukh@iu.edu}
\begin{document}

\begin{abstract}
Applying the Lyapunov--Schmidt reduction approach introduced by Mielke and Schneider in their analysis of the
fourth-order scalar Swift--Hohenberg equation, we carry out a rigorous small-amplitude stability analysis of Turing
patterns for the model introduced by Matthews and Cox for pattern formation with a conservation law.
Our results confirm that stability is accurately predicted in the small-amplitude limit by the 
formal modified Ginzburg--Lanadau system (mGL) consisting of a coupled Ginzburg--Landau equation and mean mode equation derived by Matthews and Cox, 
rigorously validating the standard weakly unstable approximation.
\end{abstract}

\date{\today}
\maketitle

\tableofcontents

\section{Introduction}

The topic of pattern formation has a wide range of applications and has attracted a lot of interest since the fundamental
observation of Turing \cite{T,C} that reaction diffusion systems 
modeling biological/chemical processes
can develop patterns through destabilization of the homogeneous state.

Besides the question of existence, one of the fundamental topics is stability of periodic patterns and their behavior under small perturbations \cite{E,NW,M1,M2,M3,S1,S2,DSSS,SSSU,JZ,JNRZ1,JNRZ2}.  The formal small-amplitude theory of Eckhaus \cite{E} 
deriving the Ginzburg--Landau equation as a canonical model for behavior near the threshold of instability in
a variety of processes states that a regular periodic pattern is stable provided its wavenumber lies
within the Eckhaus band. The rigorous characterization of spectral stability has been carried out in all details only for
the particular case of the (scalar) Swift-Hohenberg equation \cite{M1,M2,S1} and recently for the Brusselator model \cite{SZJV16}.

However, there is a large class of problems for which the equation governing the modulation of small-amplitude patterns is not the Ginzburg--Landau equation, but the modified Ginzburg--Lanadau system (mGL), that is, \be 
\begin{split}
	\partial_{\hat t} A =& \partial^2_{\hat x} A+A  -|A|^2A-AB,\\
	\partial_{\hat t} B=&\sigma\partial^2_{\hat x} B+ \mu\partial^2_{\hat x}(|A|^2),\,\,\sigma,\mu-\hbox{const.}
\end{split}
\ee Such
situations occur when the system possesses a conservation law \cite{CH93,MC00}. In particular, in \cite{MC00}, a few physical examples in which the 
formal modified Ginzburg--Lanadau system (mGL) arise are described, including convection with fixed-flux boundaries and
convection in a magnetic field.

In this paper we carry out a rigorous small-amplitude stability analysis of Turing
patterns for pattern formation with a conservation law.
Our results confirm that stability is accurately predicted in the small-amplitude limit by the 
formal modified Ginzburg--Lanadau system (mGL) derived by Matthews and Cox \cite{MC00}, 
rigorously validating the standard weakly unstable approximation. To be more specific, our main focus is to consider the following model for pattern formation with a conservation law

\ba\label{sh1c}
\partial_t u & =-\partial_x^2\big[ -(1+\partial_x^2)^2u+\eps^2u-su^2-u^3  \big].
\ea
For model \eqref{sh1c}, there is a Turing instability of the equilibrium state at $\eps=0$, with linear oscillating
modes $c e^{\pm i x}$.
Thus, following standard convention, 
we expect a smooth branch of solutions 
\be\label{branch}
u=\{ \alpha e^{i(1+ \eps \omega ) x}\eps  + O(\eps^2)\} + c.c.,
\ee
bifurcating from $\eps=0$, where $c.c.$ denotes complex conjugate, and $\omega$ lies in an appropriate
range. Following the Lyapunov--Schmidt reduction program laid out in \cite{M1,M2,S1}, we describe the unique branch of solutions bifurcating from equilibrium in a neighborhood of the
Turing instability, and give a detailed description of the spectrum of the linearized operator about the bifurcating solution,
showing that it agrees in the Ginzbur--Landau regime to lowest order with that of the linearization of the modified Ginzburg--Landau system about
the solution corresponding to \eqref{branch}.

The analysis for the Lyapunov--Schmidt reduction program involves a few key factors.
In particular, one needs to use the symmetries of the original problem to obtain the sufficient estimates for remainder terms in the ultimately resulting $3\times 3$ reduced equations to obtain the desired spectral description. Another key factor is to use the Weierstrass preparation theorem which narrows the problem of finding the eigenvalues of the $3\times 3$ reduced spectral matrix to finding the roots of the third degree polynomial. Then one needs to regroup the coefficients of the resulting polynomial in order to decompose it into the first and second degree polynomials using Cardano's formulas. It turns out that the root of the first degree polynomial represents a non-critical eigenvalue of the $3\times 3$ reduced spectral matrix.

Another contribution is to reframe the stability analysis of the modified Ginzburg--Landau system in a way
illuminating the connection with Lyapunov--Schmidt reduction, for which, under appropriate interpretation/scaling,
the two processes can be seen not only to generate the same final results but to match operation-by-operation.
\subsection{Main results}\label{s:mainresults} 
We now state our main results.
Let $H^6_{per}([0,2\pi], \RR)$ denote the space of $H^6$ functions that are periodic on the interval $[0,2\pi]$. Introducing the wave number $k$ and making the independent coordinate change $x\to kx$, we may further normalize the set of periodic solutions
with wave number $k$ to periodic solutions on the fixed interval $[0,2\pi]$ of
\be \label{B model1}
0=N(\eps, k, \tilde u):=-k^2\partial_\xi^2\big[ -(1+k^2\partial_\xi^2)^2\tilde u+\eps^2\tilde u-s\tilde u^2-\tilde u^3  \big],
\qquad N(\eps,k,0)\equiv 0.
\ee
Also, we consider the following problem
\be \label{B model2}
-k^2\big[ -(1+k^2\partial_\xi^2)^2\tilde u+\eps^2\tilde u-s\tilde u^2-\tilde u^3  \big]=q,\,q\hbox{-const.},\,\,u\in H^6_{per}([0,2\pi], \RR).
\ee
Set now $k=\sqrt{1+2\omega\eps}=1+\omega\eps+\mathcal{O}(\eps^2)$.
Our first result rigorously characterizes bifurcation of periodic solutions of \eqref{sh1c} from 
equilibrium state $0$. 

\begin{theorem}[Existence]\label{existence}
	Let $s\in (-\sqrt{27/2},\sqrt{27/2})$. Then there exists an $\eps_0$ such that for all $\eps\in(0,\eps_0)$ and all 
		$\omega\in I_E= [-\frac{1}{2},\frac{1}{2}]$ there is a unique small solution $\tilde u_{\eps, \omega, s}\in H^6_{per}([0,2\pi], \RR)$ of \eqref{B model2} with $q=0$ which is even in $\xi$, positive at $\xi=0$, and has the expansion formula:
	\be
	\begin{split}
	\tilde u_{\eps,\omega,s}(\xi)&=6\sqrt{\frac{1-4\omega^2}{27-2s^2}}\cos \xi \eps+\big(-32\omega s^2\sqrt{\frac{1-4\omega^2}{(27-2s^2)^3}}\cos\xi-\frac{2s(1-4\omega^2)}{27-2s^2}\cos 2\xi\big)\eps^2+\mathcal{O}(\eps^3).
	\end{split}
	\ee
	Note that when $\omega=\pm\frac{1}{2}$, $\tilde u_{\eps,\omega,s}\equiv0$ reduces to the equilibrium (zero) solution.
\end{theorem}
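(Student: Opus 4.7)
The plan is to apply the Lyapunov--Schmidt reduction of \cite{M1,M2,S1}. Write \eqref{B model2} with $q=0$ as $F(\tilde u,\eps,\omega,s)=0$, where $F:=-(1+k^2\partial_\xi^2)^2\tilde u+\eps^2\tilde u-s\tilde u^2-\tilde u^3$ and $k^2=1+2\omega\eps$. The linearization $L_0:=-(1+\partial_\xi^2)^2$ at $(\tilde u,\eps)=(0,0)$ acts on $\cos n\xi$ with eigenvalue $-(1-n^2)^2$, so on the even subspace of $H^6_{per}([0,2\pi],\RR)$ its kernel is the one-dimensional space $\mathrm{span}\{\cos\xi\}$ (the $\sin\xi$ mode is killed by evenness, i.e., by fixing the translation), and $L_0$ is Fredholm of index zero with range of codimension one.

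Next, let $P$ denote $L^2$-projection onto $\cos\xi$ and set $Q=I-P$. Decompose $\tilde u=\eps\alpha\cos\xi+w$ with $w$ in the even part of $QH^6_{per}$, and solve the range equation $QF(\eps\alpha\cos\xi+w,\eps,\omega,s)=0$ for $w=w(\alpha,\eps,\omega,s)$ by the implicit function theorem, using that $Q L_0 Q$ is an isomorphism on the relevant subspace. Expand $w=\eps^2 w_{20}+\eps^3 w_{30}+O(\eps^4)$; the $O(\eps)$ contribution is absent because $(1+\partial_\xi^2)\partial_\xi^2\cos\xi=0$ kills the linear-in-$\eps$ piece of the operator acting on $\cos\xi$, and the $O(\eps^2)$ equation $L_0 w_{20}=s\alpha^2\cos^2\xi=\tfrac{s\alpha^2}{2}(1+\cos 2\xi)$ is solved by Fourier inversion on the modes $\{1,\cos 2\xi\}$, producing the $\cos 2\xi$ coefficient stated in the expansion once $\alpha$ is determined.

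The scalar bifurcation equation arises from the Fredholm condition at $O(\eps^3)$: projecting onto $\cos\xi$ yields $(1-4\omega^2)\alpha+C(s)\alpha^3+O(\eps^2)=0$, where $C(s)$ is assembled from the cubic self-interaction $-\alpha^3\cos^3\xi$ and the quadratic feedback $-2s\cos\xi\cdot w_{20}$. The bound $s\in(-\sqrt{27/2},\sqrt{27/2})$ is precisely what keeps $C(s)\neq 0$, so for $|\omega|<1/2$ and small $\eps>0$ the implicit function theorem yields a unique small positive root $\alpha(\eps,\omega,s)=6\sqrt{\frac{1-4\omega^2}{27-2s^2}}+O(\eps)$; at $\omega=\pm 1/2$ the equation forces $\alpha=0$ and the branch collapses to the trivial solution, while a further iteration at $O(\eps^4)$ produces the $\cos\xi\cdot\eps^2$ correction displayed in the theorem.

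The main difficulty lies in the careful bookkeeping of how the quadratic nonlinearity $-s\tilde u^2$ couples the primary $\cos\xi$ mode to both the constant and $\cos 2\xi$ components of $w_{20}$, and how these in turn reenter the bifurcation equation through $-2s\cos\xi\cdot w_{20}$; the coefficient $C(s)$ must be assembled consistently with the way the conservation-law mean mode is handled under the $q=0$ restriction. The IFT setup, the symmetry reduction of the kernel, the selection of the positive branch by $\tilde u(0)>0$, and uniqueness among small even solutions are otherwise routine.
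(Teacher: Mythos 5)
Your overall Lyapunov--Schmidt skeleton (evenness to cut the kernel down, range equation via the implicit function theorem, cubic bifurcation equation at $O(\eps^3)$) is sound, but there is a genuine gap in your treatment of the mean (constant) Fourier mode, and it changes the answer. If you literally solve \eqref{B model2} with $q=0$, i.e. $-(1+k^2\partial_\xi^2)^2\tilde u+\eps^2\tilde u-s\tilde u^2-\tilde u^3=0$, then your own $O(\eps^2)$ equation $L_0w_{20}=\tfrac{s\alpha^2}{2}(1+\cos 2\xi)$ with $L_0=-(1+\partial_\xi^2)^2$ forces $w_{20}=-\tfrac{s\alpha^2}{2}-\tfrac{s\alpha^2}{18}\cos 2\xi$: the constant component $-\tfrac{s\alpha^2}{2}$ is \emph{not} zero (constants are not in $\ker L_0$ since $L_0 1=-1$), and no such term appears in the expansion you are trying to prove. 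Worse, this constant feeds back into the bifurcation equation through $-2s\cos\xi\cdot w_{20}$: the $\cos\xi$-projection at $O(\eps^3)$ becomes $(1-4\omega^2)\alpha-\tfrac{27-38s^2}{36}\alpha^3=0$ rather than $(1-4\omega^2)\alpha-\tfrac{27-2s^2}{36}\alpha^3=0$. Carried out consistently, your computation therefore yields $\alpha=6\sqrt{(1-4\omega^2)/(27-38s^2)}$, requires $|s|<\sqrt{27/38}$ instead of $|s|<\sqrt{27/2}$, and produces a nonzero $O(\eps^2)$ mean --- none of which matches the theorem. In particular your claim that $s\in(-\sqrt{27/2},\sqrt{27/2})$ ``is precisely what keeps $C(s)\neq 0$'' is false for the $C(s)$ your own setup generates, and the asserted conclusion $\alpha=6\sqrt{(1-4\omega^2)/(27-2s^2)}+O(\eps)$ does not follow from your range equation.

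The paper reaches the stated expansion by a different normalization of the mean mode. It works with the un-integrated operator $N$ of \eqref{B model1}, whose linearization $L_{per}=\partial_\xi^2(1+\partial_\xi^2)^2$ has the \emph{three}-dimensional kernel spanned by $\cos\xi$, $\sin\xi$, $1$; the constant mode enters the reduction as a free parameter $\alpha_3$ (its bifurcation equation is trivially satisfied because of the outer $\partial_\xi^2$), and the paper sets $\alpha_3=0$, i.e. normalizes $\tilde u$ to have zero mean. With zero mean there is no constant in $w_{20}$, the cubic coefficient is $\tfrac{3}{4}-\tfrac{s^2}{18}=\tfrac{27-2s^2}{36}$, and one recovers exactly the displayed amplitude and the range $|s|<\sqrt{27/2}$. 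For that solution the constant in \eqref{B model2} is $q=\tfrac{s\alpha^2}{2}k^2\eps^2+O(\eps^3)\neq 0$ when $s\neq 0$, so the ``$q=0$'' in the theorem statement is not what the paper's proof actually delivers. To prove the theorem as the paper intends, you must either adopt the zero-mean normalization (treating $q$ as a determined, generically nonzero constant), or acknowledge that the literal $q=0$ problem has a genuinely different small solution. As written, your proposal sets up the $q=0$ problem but then imports the zero-mean answer, so it proves neither statement.
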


\begin{proof}
Given in Section \ref{s:existence}.
\end{proof}

Our second result rigorously characterizes diffusive stability/instability of bifurcating solutions.

Linearizing \eqref{sh1c} about $\tilde u_{\eps, \omega, s}$, we have
\be 
\begin{split}
	\hat B_{\eps, \omega, s}(\partial_\xi)&:\dom(\hat B_{\eps, \omega, s}(\partial_\xi))=H^6(\mathbb{R})\subset L^2(\mathbb{R})\to L^2(\mathbb{R}),\\
\hat B_{\eps, \omega, s}(\partial_\xi)v&:=-k^2\partial_\xi^2\big[ -(1+k^2\partial_\xi^2)^2+df(\tilde u_{\eps, \omega, s}) \big]v.
\end{split}
\ee
Next, we define the Bloch operator family: for $\sigma \in \RR$,
\be
\begin{split}
B(\eps, \omega, s,\sigma) &: L^2_{per} ([0,2\pi])\subset H^6_{per}([0,2\pi]) \longrightarrow  L^2_{per} ([0,2\pi]),\\
B(\eps, \omega, s, \sigma)V &:= -k^2(\partial_\xi + i\sigma)^2\big[ -(1+k^2(\partial_\xi + i\sigma)^2)^2+df(\tilde u_{\eps, \omega, s}) \big]V,
\end{split}
\ee
where $k=\sqrt{1+2\omega\eps}=1+\omega\eps+\mathcal{O}(\eps^2)$.

\begin{theorem}[Stability]\label{stabthm}
	Let $u_{\eps,\omega,s}$ be the solution from Theorem \ref{existence}.  Then there exist
	 $\tilde\eps_0\in(0,\eps_0]$, where $\eps_0$ is taken from Theorem \ref{existence}, $\sigma_0>0$ and $\delta>0$ such that for all $\eps\in(0,\tilde\eps_0)$, all $\sigma\in[0,\sigma_0)$, all $\omega\in[-\frac{1}{2},\frac{1}{2}]$ and all $s\in (-\sqrt{27/2},\sqrt{27/2})$ the spectrum of $B(\eps, \omega, s,\sigma)$ has the decomposition:
	\be
	\begin{split}
		\Sp(B(\eps, \omega, s,\sigma))=S\cup\{\lambda_1,\lambda_2,\lambda_3\},
	\end{split}
	\ee
	where $\Re\lambda<-\delta$ for $\lambda\in S$ and $|\lambda_j|<<1$.
	 Moreover, for $s\in[-\sqrt{\frac{27}{38}}, \sqrt{\frac{27}{38}}]$  and each fixed 
	 $\omega\in I_S= (-\min\{\frac{1}{2},\sqrt{\frac{27-38s^2}{12(27-14s^2)}}\},\min\{\frac{1}{2},\sqrt{\frac{27-38s^2}{12(27-14s^2)}}\})$ there exists $\hat\eps_0\in(0,\tilde\eps_0)$ such that for all $\eps\in(0,\hat\eps_0)$, all $\sigma\in[0,\sigma_0)$
	 \be\label{lamexp}
	\begin{split}
		&	\Re\lambda_{1}\leq c(\eps, \omega)+\tilde c(\eps, \omega)\sigma-\tilde{\tilde c}_1(\eps, \omega, s)\sigma^2+\mathcal O(|\sigma|^3),\\
		&	\Re\lambda_{2}\leq -\tilde{\tilde c}_2(\eps, \omega, s)\sigma^2+\mathcal O(|\sigma|^3),\\
		&	\Re\lambda_{3}\leq -\tilde{\tilde c}_3(\eps, \omega, s)\sigma^2+\mathcal O(|\sigma|^3),
	\end{split}
	\ee
	for $c(\eps,\omega)<0<\tilde{\tilde c}_j(\eps, \omega, s)$, giving \emph{diffusive stability},
	while if $s\in(\sqrt{\frac{27}{2}},-\sqrt{\frac{27}{38}}) \cup(\sqrt{\frac{27}{2}},\sqrt{\frac{27}{38}})$
	 or $\omega \in I_E\setminus \overline{I_S}$, then
	\be\label{unst}
\max_\sigma\{\Re\lambda_1,\Re\lambda_2,\Re\lambda_3\}>0,
\ee
giving \emph{diffusive instability}.

\end{theorem}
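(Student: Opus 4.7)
\textit{Plan of proof.} I would follow the strategy advertised in the introduction: Bloch decomposition, Lyapunov--Schmidt reduction to a $3\times 3$ matrix, Weierstrass preparation of the resulting characteristic cubic, Cardano-style factorization, and comparison of the leading coefficients with those predicted by linearizing the mGL system. The first move is the spectral splitting. At $(\eps,\sigma)=(0,0)$ the Bloch operator is $V\mapsto \partial_\xi^2(1+\partial_\xi^2)^2 V$ on $L^2_{per}([0,2\pi])$, with Fourier eigenvalues $-n^2(1-n^2)^2$; exactly the three modes $n\in\{-1,0,1\}$ are critical, and every other eigenvalue is at most $-36$. Since $B(\eps,\omega,s,\sigma)$ has compact resolvent and depends analytically on all parameters, standard perturbation theory produces, for small $\eps$ and $\sigma$, an analytic three-dimensional spectral projection onto a critical subspace $\cE_c(\eps,\omega,s,\sigma)$, with complementary spectrum $S$ lying in $\{\Re\lambda<-\delta\}$ for a uniform $\delta>0$. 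This is the advertised decomposition $\Sp(B)=S\cup\{\lambda_1,\lambda_2,\lambda_3\}$.

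Next, pick bases of $\cE_c$ that at $(\eps,\sigma)=(0,0)$ reduce to $\{1,e^{i\xi},e^{-i\xi}\}$, and denote by $M(\eps,\omega,s,\sigma)$ the (analytic) $3\times 3$ restriction of $B$ to $\cE_c$ obtained via the usual Lyapunov--Schmidt inversion of the stable block. Two symmetries force exact vanishings at $\sigma=0$: the prefactor $-k^2(\partial_\xi+i\sigma)^2$ kills the constant mode there (conservation law), and translation invariance of \eqref{sh1c} puts $\partial_\xi\tilde u_{\eps,\omega,s}$ into $\ker B(\eps,\omega,s,0)$. Consequently $M(\eps,\omega,s,0)$ has a two-dimensional kernel and a unique nontrivial eigenvalue $c(\eps,\omega)$ of order $\eps^2$ and negative, as in the usual Ginzburg--Landau amplitude mode. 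Reality together with the evenness of $\tilde u_{\eps,\omega,s}$ in Theorem \ref{existence} then fixes enough of the off-diagonal structure of $M$ to make the subsequent expansions clean.

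The characteristic polynomial $p(\lambda)=\det(\lambda I-M)$ is analytic in $(\lambda,\eps,\omega,s,\sigma)$ with a triple zero at the origin; Weierstrass preparation in $\lambda$ replaces it by a monic cubic with analytic coefficients. Since $p(\lambda)|_{\sigma=0}=\lambda^2(\lambda-c(\eps,\omega))$ with $c(\eps,\omega)\ne 0$, the implicit function theorem extracts an analytic root $\lambda_1(\eps,\omega,s,\sigma)=c(\eps,\omega)+\tilde c(\eps,\omega)\sigma-\tilde{\tilde c}_1(\eps,\omega,s)\sigma^2+O(\sigma^3)$, leaving a quadratic factor $\lambda^2+p_1\lambda+p_0$ whose coefficients vanish at $\sigma=0$. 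The conservation/translation symmetries in fact force $p_0=O(\sigma^2)$ and $p_1=O(\sigma^2)$, so the quadratic formula (the Cardano step trivializes for a quadratic) yields $\lambda_{2,3}=-\tilde{\tilde c}_{2,3}(\eps,\omega,s)\sigma^2+O(\sigma^3)$. Substituting the expansion of $\tilde u_{\eps,\omega,s}$ produces explicit rational expressions in $s^2$ and $\omega$ for the three $\tilde{\tilde c}_j$, and requiring all of them positive is equivalent to $|s|\leq\sqrt{27/38}$ together with $|\omega|<\sqrt{(27-38s^2)/(12(27-14s^2))}$ (subject to the Eckhaus cap $|\omega|<1/2$), which is exactly the set $I_S$. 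Outside this window at least one $\tilde{\tilde c}_j$ changes sign, producing $\Re\lambda_j(\sigma)>0$ for small positive $\sigma$ and hence diffusive instability. An operationally parallel linearization of the mGL system, described in the introduction, yields the same thresholds, giving the Matthews--Cox validation and a useful cross-check.

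The main obstacle, in my view, is the bookkeeping that joins these steps rather than any one step in isolation. One must carry the Lyapunov--Schmidt reduction to high enough order in $\eps$ and $\sigma$ that the coefficients $\tilde{\tilde c}_j$ emerge with their correct leading values, while simultaneously using reality, evenness, translation, and conservation symmetries to eliminate the many intermediate terms that would otherwise pollute the expansion. In particular, the proof that $p_0$ and $p_1$ vanish to order $\sigma^2$ (rather than $\sigma$) rests on the conservation law and is precisely where the analysis departs most sharply from the classical Swift--Hohenberg case treated in \cite{M1,M2,S1}.
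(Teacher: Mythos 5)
Your plan follows the paper's route almost exactly (Bloch decomposition and splitting off the strictly stable spectrum, Lyapunov--Schmidt reduction to a $3\times 3$ system, Weierstrass preparation and Cardano-style factorization of the resulting cubic, comparison with the mGL dispersion relation), and your identification of the stability window $I_S$ and of the instability criterion is correct. However, there is a genuine quantitative gap at the decisive step. After factoring out the amplitude root $\lambda_1\approx c(\eps,\omega)$, you assert that the remaining quadratic factor $\lambda^2+p_1\lambda+p_0$ has $p_0=\mathcal O(\sigma^2)$ and $p_1=\mathcal O(\sigma^2)$, and you conclude $\lambda_{2,3}=-\tilde{\tilde c}_{2,3}\sigma^2+\mathcal O(\sigma^3)$. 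Those orders do not yield that conclusion: if $p_0$ were genuinely of order $\sigma^2$ the roots would be of size $|\sigma|$ (since $\lambda_{2,3}\sim\pm\sqrt{-p_0}$ when $p_1^2\ll p_0$), and one of them would generically have positive real part. What is actually needed --- and what the paper proves --- is $p_0=\tilde{\tilde c}_2\tilde{\tilde c}_3\,\sigma^4+\mathcal O\big(\sigma^4(\eps+|\sigma|)\big)$ together with $p_1=(\tilde{\tilde c}_2+\tilde{\tilde c}_3)\sigma^2+\dots$; equivalently, the constant coefficient of the prepared cubic must have the form $a_0=16\sigma^6-4c(\eps)\sigma^4-64\omega^2\sigma^4\eps^2-288s^2A\sigma^4\eps^2+\mathcal O\big(\sigma^4(\eps+|\sigma|)^3\big)$, i.e.\ it must vanish to order $\sigma^4$ with controlled errors, not merely to order $\sigma^2$. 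Securing those two extra orders of vanishing, for the error terms as well as the explicit ones, is precisely the content of the paper's refined remainder estimates: Lemma \ref{remainder} at $\sigma=0$, and the symmetry lemma together with Corollary \ref{errcorr}, which show that the entries $m_{11},m_{22},m_{33},m_{13},m_{31}$ of the reduced matrix are even in $\sigma$ (with $m_{31},m_{33}$ vanishing at $\sigma=0$) while the remaining entries are odd. Without these, the naive errors $\mathcal O(\eps^3)$, $\mathcal O(\sigma^3)$ in the reduced matrix swamp the $\sigma^4\eps^2$ terms whose signs determine $\tilde{\tilde c}_2,\tilde{\tilde c}_3$, and the stability conclusion cannot be read off.

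Two smaller points. First, a Taylor expansion in $\sigma$ at fixed $\eps$ only controls the regime $|\sigma|\lesssim\eps$, because $c(\eps)=\mathcal O(\eps^2)$ competes with $\sigma^2$; to obtain the statement for all $\sigma\in[0,\sigma_0)$ with $\sigma_0$ independent of $\eps$, the paper passes to the Ginzburg--Landau scaling $\sigma=\eps\hat\sigma$ and treats the three regimes $|\hat\sigma|\ll1$, $|\hat\sigma|$ of order one, and $|\hat\sigma|\gg1$ separately; your outline covers only the first. Second, if you define $M$ as the genuine restriction of $B$ to the critical spectral subspace, its characteristic polynomial is already a monic cubic and the Weierstrass preparation step is superfluous; the paper needs it because its reduced matrix $m(\eps,\omega,s,\sigma,\lambda)$ depends on $\lambda$ through the Lyapunov--Schmidt inversion of the stable block. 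Either formulation works, but you should commit to one.
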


\begin{proof}
Given in Section \ref{s:stability}.
\end{proof}

Theorems \ref{existence} and \ref{stabthm} together rigorously validate the predictions of the modified Ginzburg--Landau approximation
regarding existence and stability of small bifurcating solutions.

Our third main result states that, within the Ginzburg--Landau regime $\lambda\sim \eps^2$, $\sigma \sim \eps$,
the Ginzburg Landau approximation not only well-predicts stability/instability, but to lowest order also
the linearized dispersion relations for the two smallest eigenmodes.

\bt\label{dispthm}
Setting $\sigma =:\eps \hat \sigma$, $\lambda_j=:\eps^2 \hat \lambda_j$ in accordance with the Ginzburg--Landau 
scaling, $\lambda_j$ as in \eqref{lamexp}, we obtain expansions 
\be\label{exactdisp}
	 		 		 	\begin{split}
	&\hat\lambda_{1}(\hat\sigma)=-2(1-4\omega^2)\eps^2+(-\frac{36s^2}{27-2s^2}-\frac{4(1+4\omega^2)}{1-4\omega^2})\hat\sigma^2+\mathcal O(|\hat\sigma|^3 + \eps(1+|\hat\sigma|)),\\
	 		 		 	&\hat\lambda_{2}(
	 		 		\hat\sigma)=\lambda_-\hat\sigma^2
		+\mathcal O(|\hat\sigma|^3 +\eps \hat \sigma^2),\\
		&\hat\lambda_{3}(
		\hat\sigma)=\lambda_+\hat\sigma^2
		+\mathcal O(|\hat\sigma|^3 +\eps \hat \sigma^2),\\
			&\lambda_{\pm}=\frac{(-5+\frac{36s^2}{27-2s^2}+\frac{32\omega^2}{1-4\omega^2})\pm\sqrt{(5-\frac{36s^2}{27-2s^2}-\frac{32\omega^2}{1-4\omega^2})^2-4(4-\frac{144s^2}{27-2s^2}-\frac{32\omega^2}{1-4\omega^2})}}{2}.
	 		 		 		\end{split}
	 		 		 			\ee
for $|\hat\sigma|<<1$,
agreeing to lowest order with the corresponding expansions for the associated modified Ginzburg--Landau
approximation (cf. \eqref{gldisp}).
\et

\begin{proof}
Given in Section \ref{s:GLcompare}.
\end{proof}

\subsection{Discussion and open problems}\label{s:discussion}

In \cite{BJK16}, the authors give several examples about
bifurcating stable periodic waves from uniform states by Turing instability for parabolic systems of conservation laws. That is 
\be
\label{fullcl}
u_t + f(\eps, u)_x=(D(u)u_x)_x,
\ee
$u\in \R^n$.
These results suggest an open problem of deriving the amplitude equations governing the modulation of the small-amplitude patterns for \eqref{fullcl}, and rigorously validating the standard weakly unstable approximation.

\section{Existence of periodic solutions}\label{s:existence}
In this section we study existence of periodic solutions, carrying out the proof of Theorem \ref{existence}.

Then  instability occurs at $\eps=0$ with the corresponding wave number $k=1$ and we consider a two-paramametric $(\eps, k)$ family of stationary solutions $\tilde u_{\eps,k}$ which bifurcate for $\eps=0$ from $u^*=0$. In order to show that there are bifurcating periodic stationary solutions from $u^*=0$, we use the Lyapunov-Schmidt reduction.  Let $\tilde u(\eps,k,\xi)$ be $2\pi$ - periodic in $\xi$ where $\xi=kx$ (that is, we assume $\frac{2\pi}{k}$ - periodic in $x$). We will look at the expression of the periodic solution $\tilde u$ in a neighborhood of $(\eps,k,u)=(0, 1, 0)$. By \eqref{sh1c},  $\tilde u$ satisfies
\be \label{B model}
0=N(\eps, k, \tilde u):=-k^2\partial_\xi^2\big[ -(1+k^2\partial_\xi^2)^2 \tilde u+\eps^2 \tilde u-s \tilde u^2- \tilde u^3  \big],
\ee
where  $N: \RR^2 \times H^6_{per}([0,2\pi], \RR) \longrightarrow  L^2_{per}([0,2\pi], \RR) $ is a $C^{\omega}$ mapping.

\subsection{The Lyapunov-Schmidt reduction for the equation \eqref{B model}}

\noindent We first sketch the Lyapunov-Schmidt reduction for the equation \eqref{B model}. Since $N(0, 1, 0) =0$, we want to study the stationary periodic solutions of the equation \eqref{B model} in a neighborhood of $(0, 1, 0)$ in $\RR^2 \times H^6_{per}([0,2\pi], \RR)$. We define
\be
L_{per}:=\partial_{\tilde u}N(0, 1, 0) = \partial_\xi^2(1+\partial_\xi^2)^2.
\ee

If $L_{per}$ was invertible and $L^{-1}_{per}$ was bounded from $L^2_{per}([0,2\pi], \RR)$ to $H^6_{per}([0,2\pi], \RR)$, then by the Implicit Function Theorem, in a neighborhood of $(0, 1, 0)$, there would exist a unique solution $\tilde u(\xi) = \phi (\eps,k)$ satisfying \eqref{B model} for some $C^{\omega}$ function $\phi$. In this case, however, $L_{per}$ is not invertible, so we apply the Lyapunov-Schmidt reduction. We first denote the kernel and range of $L_{per}$ by $\ker (L_{per})$ and $\ran (L_{per})$, respectively. Moreover, we assume the decompositions:
\be
H^6_{per}([0,2\pi], \RR)=\ker(L_{per}) \oplus X_1 \quad \text{and}  \quad L^2_{per}([0,2\pi], \RR) = \ran(L_{per}) \oplus Y_1,
\ee
where $X_1$ and $Y_1$ are topological complements of $\ker(L_{per})$ and $\ran (L_{per})$ in $H^6_{per}([0,2\pi], \RR)$ and $L^2_{per}([0,2\pi], \RR)$, respectively. Then there are two continuous projections $P:H^6_{per}([0,2\pi], \RR) \longrightarrow H^6_{per}([0,2\pi], \RR)$ and $Q : L^2_{per}([0,2\pi], \RR)  \longrightarrow L^2_{per}([0,2\pi], \RR)$ such that
\be \label{condition of projections}
\ran (P)=\ker (L_{per}) \quad \text{and} \quad \ker(Q)=\ran(L_{per}),
\ee
that is, $P(H^6_{per}([0,2\pi], \RR))=\ker(L_{per})$ and $(I-Q)(L^2_{per}([0,2\pi], \RR))=\ran(L_{per})$. Now we decompose $\tilde u \in H^6_{per}([0,2\pi], \RR)$ into $U + V$, where $U=P\tilde u \in \ker (L_{per})$ and $V=(I-P)\tilde u  \in X_1$. Then one can rewrite the equation \eqref{B model} as
\be\label{two equations}
QN(\eps, k,  U +V) = 0, \quad (I-Q)N(\eps, k,  U +V) = 0.
\ee
We first focus on the second equation. Defining
\be
G(\eps, k, U, V) := (I-Q)N(\eps, k, U +V) = 0,
\ee
notice that $\ds G(0, 1, 0)=(I-Q)N(0, 1, 0)=0$ and
\be
\partial_V G(0, \pm 1,0)=(I-Q)\partial_{\tilde u}N(0, 1, 0)= (I-Q)L_{per}=L_{per}.
\ee
Since $L_{per}:(I-P)H^6_{per}([0,2\pi], \RR)  \longrightarrow (I-Q)L^2_{per}([0,2\pi], \RR)$ is bijective, applying the Implicit Function Theorem, $G(\eps,k,U,V)$ can be solved for $V$ in $(I-P)H^6_{per}([0,2\pi], \RR)$ as a function of $(\eps,k,U)$. More precisely, there exist an open neighborhood $\Omega$ of $U=0$ in $\ker(L_{per})$, an open neighborhood $\Gamma$ of $(\eps,k)=(0, 1)$ in $\RR^2$, and a $C^{\omega}$ function $\Phi : \Omega \times \Gamma \longrightarrow \ker(P) (=X_1) $ such that $\Phi (0, 1, 0) = 0$ and
\be
(I-Q)N(\eps,k,U+\Phi (\eps, k, U) ) \equiv 0,
\ee
for all $(\eps,k,U) \in \Gamma \times \Omega$. We now substitute $V=\Phi(\eps,k,U)$ into the first equation of \eqref{two equations} in order to obtain the bifurcation equation:
\be \label{GFE}
QN(\eps,k, U +\Phi (\eps, k, U) )= 0.
\ee
Setting
\be
B(\eps,k,U)=QN(\eps,k, U +\Phi (\eps, k, U) ),
\ee
B is a $C^{\omega}$ function from $\Gamma \times \Omega$ to $Y_1$ which has a finite dimension, $B(0, 1,0)=0$ and $\partial_U B(0, 1, 0)=0$. Actually, solving \eqref{GFE} is equivalent to solving the original equation \eqref{B model}, that is, it is enough to solve the finite -dimensional problem $B(\eps,k,U)=0$ locally in $\RR^2 \times \ker( L_{per})$.

\begin{remark}
In the above argument, $(I-P)H^6_{per}([0,2\pi], \RR) =  \ker(P) = X_1$
\end{remark}

\subsection{Periodic solutions $\tilde u$ of \eqref{B model}}

By linearization of \eqref{B model} about $u^* =0$, we have
\be \label{linearization}
\partial_{\tilde u}N(\eps, k, u^*)[U]=-k^2\partial_\xi^2\big( -(1+k^2\partial_\xi^2)^2+\eps^2\big)[U].
\ee

In particular, putting $\eps=0$, $k^2=1$, we have
\be \label{linear}
L_{per}U:=\partial_{\tilde u}N\left(0,1, 0\right)[U]= \partial_\xi^2(1+\partial_\xi^2)^2[U].
\ee
Note that $L_{per}$ is self-adjoint and the kernel of $L_{per}$ is spanned by
\be \label{kernel}
U_1(\xi)=\cos \xi \quad  \quad U_2(\xi)=\sin \xi \quad  \text{and} \quad U_3(\xi)=1.
\ee

Now, in order to use Lyapunov-Schmidt reduction, we first define the zero eigenprojection
\be \label{projection P}
Qu= \langle  U_1, u\rangle U_1+\langle  U_2, u\rangle U_2+\frac{1}{2}\langle  U_3, u\rangle U_3, \quad \text{where} \quad \langle u, v \rangle = \frac{1}{\pi} \int_0^{2\pi}u \cdot v d\xi,
\ee
and define the mapping
\be \label{projection tilde P}
\tilde  Q: L^2_{per}([0,2\pi], \RR) \rightarrow \RR^3 ; u \mapsto (\langle U_1, u\rangle, \langle U_2, u\rangle, \frac{1}{2}\langle U_3, u\rangle)^{T},
\ee
that is, $\tilde Q$ is just a vector form in $\RR^3$ of the projection $Q$.  
Decomposing $\tilde u  \in H^6_{per}([0,2\pi], \RR)$ into $\a_1 U_1+\a_2 U_2+\a_3 U_3  + V$, where $PV=0$, we see that the linearization \eqref{linear} is invertible on $(I-P)H^6_{per}([0,2\pi], \RR)$. Moreover, recalling \eqref{B model}, we have
\be \label{projection equations}
\begin{split}
& \tilde  QN(\eps,k, \a_1 U_1+\a_2 U_2 + V )=0, \\
& (I-Q)N(\eps,k, \a_1 U_1+\a_2 U_2+\a_3 U_3 + V)=0,
\end{split}
\ee
where
\be
(I-Q)N(\eps,k, \a_1 U_1+\a_2 U_2 +\a_3 U_3+ V):\mathbb{R}^5\times\ran(I-P)\to\ran(I-Q).
\ee

By the Implicit Function Theorem, there exists  an open neighborhood $U\subset\mathbb{R}^5$ of $(0,1,0,0,0)$ and a unique function $V: U\to(I-P)H^6_{per}([0,2\pi], \RR)$ that solves the second equation of \eqref{projection equations} for $(\eps,k,\alpha_1,\alpha_2,\a_3)\in U$. After we substitute $V$ into the first equation of \eqref{projection equations}, the reduced system (or the bifurcation system) will be $O(2)$ equivariant with respect to $\a_1$ and $\a_2$. This is due to the fact that the original problem is translation invariant and reflection symmetric. And since there is second derivative with respect to $\xi$ in \eqref{B model} the projection onto $U_3$ in \eqref{projection tilde P} is $0$. Hence, we can conclude that the nontrivial part of the reduced system is of the form

\be\label{be}
 f(\eps,k,\a_1^2+\a_2^2,\a_3)\bp \a_1 \\ \a_2 \ep=0,
 \ee

  $f$ is a real-valued scalar function (c.f. \cite[Chapters 2,5]{CL}).

Next, let us find asymptotic expansion of $V$ with respect to parameter $\alpha_1$ and set $\alpha_2=0$.\\
1.) First of all, it is clear that $V(\eps,k,0,0,\alpha_3)=0$.\\
2.) Now, we differentiate the second equation of \eqref{projection equations} with respect to $\alpha_1$.
\be\label{alpha1}
\begin{split}
& \partial_{\alpha_1}(I-Q)N(\eps,k, \a_1 U_1+\a_3 U_3 + V)=(I-Q)[-k^2\partial_\xi^2\big([ -(1+k^2\partial_\xi^2)^2+\epsilon^2] (U_1+\partial_{\alpha_1}V)\\
&-(2s u+ 3u^2)(U_1+\partial_{\alpha_1}V)  \big)]=0,
\end{split}
\ee
where $u=\a_1 U_1+\a_3 U_3 + V$.\\
Hence, by step 1.),
\be\label{partial1}
\begin{split}
& \partial_{\alpha_1}(I-Q)N(\eps,k, \a_1 U_1+\a_3 U_3 + V)|_{\alpha_1=0}\\
&=(I-Q)[-k^2\partial_\xi^2[ -(1+k^2\partial_\xi^2)^2+\epsilon^2-2s\a_3-3\a^2_3] (\partial_{\alpha_1}V)]=0.
\end{split}
\ee

Since the operator $(I-Q)[-k^2\partial_\xi^2( -(1+k^2\partial_\xi^2)^2+\epsilon^2-2s\a_3-3\a^2_3)](I-P)$ is invertible in the neighborhood of $(0,1,0,0,0)$, $\partial_{\alpha_1}V(\eps,k,\a_1,0,\a_3)|_{\alpha_1=0}=0$. \\
3a.) Next, we would like to compute $\partial^2_{\alpha_1}V(\eps,k,\a_1,0,\a_3)|_{\alpha_1=0}$.\\
We differentiate \eqref{alpha1} with respect to $\alpha_1$.\\
\be\label{alpha1^2}
\begin{split}
& \partial^2_{\alpha_1}(I-Q)N(\eps,k, \a_1 U_1+\a_3 U_3 + V)=(I-Q)[-k^2\partial_\xi^2\big([ -(1+k^2\partial_\xi^2)^2+\epsilon^2] (\partial^2_{\alpha_1}V)\\
&-(2s+ 6u)(U_1+\partial_{\alpha_1}V)^2 -(2s u+ 3u^2)\partial^2_{\alpha_1}V \big)]=0.
\end{split}
\ee
Therefore,
\be\label{partial1^2}
\begin{split}
& \partial^2_{\alpha_1}(I-Q)N(\eps,k, \a_1 U_1+\a_3 U_3 + V)|_{\alpha_1=0}\\
&=(I-Q)[-k^2\partial_\xi^2\big([ -(1+k^2\partial_\xi^2)^2+\epsilon^2-(2s\a_3+3\a_3^2)] (\partial^2_{\alpha_1}V)-(2s+6\a_3)U_1^2 \big)]=0.
\end{split}
\ee

Therefore, using \eqref{partial1^2}, we obtain
\be
\begin{split}\label{formula1^2a}
&(I-Q)[-k^2\partial_\xi^2\big([ -(1+k^2\partial_\xi^2)^2+\epsilon^2-(2s\a_3+3\a_3^2)] (\partial^2_{\alpha_1}V)|_{\alpha=0}\big)=4(s+3\a_3)k^2 \cos2 \xi.
\end{split}
\ee
Since $span\{\cos 2\xi\}$ is an invariant subspace for the invertible operator $(I-Q)[-k^2\partial_\xi^2( -(1+k^2\partial_\xi^2)^2+\epsilon^2-(2s\a_3+3\a_3^2))](I-P)$, $\partial^2_{\alpha_1}V|_{\alpha_1=0}$ is of the form $a \cos 2\xi$. It follows from \eqref{formula1^2a} that
\be
\begin{split}
& 4k^2( -(1-4k^2)^2+\epsilon^2-(2s\a_3+3\a_3^2))a=4(s+3\a_3)k^2.
\end{split}
\ee
Therefore,
\be\label{partiala11}
\begin{split}
 &\partial^2_{\alpha_1}V|_{\alpha=0}=\frac{s+3\a_3}{ -(1-4k^2)^2+\epsilon^2-(2s\a_3+3\a_3^2)}\cos 2\xi.
 \end{split}
 \ee
4.) Next, we compute  $\partial^3_{\alpha_1}V(\eps,k,\a_1,0,\a_3)|_{\alpha_1=0}$.\\
We differentiate \eqref{alpha1^2} with respect to $\alpha_1$.\\
\be\label{alpha1^3}
\begin{split}
\begin{split}
& \partial^3_{\alpha_1}(I-Q)N(\eps,k, \a_1 U_1+\a_3 U_3 + V)=(I-Q)[-k^2\partial_\xi^2\big([ -(1+k^2\partial_\xi^2)^2+\epsilon^2] (\partial^3_{\alpha_1}V)\\
&-6(U_1+\partial_{\alpha_1}V)^3-3(2s+ 6u)(U_1+\partial_{\alpha_1}V)\partial^2_{\alpha_1}V -(2s u+ 3u^2)\partial^3_{\alpha_1}V \big)]=0.
\end{split}
\end{split}
\ee

Therefore,
\be\label{partial1^3}
\begin{split}
& \partial^3_{\alpha_1}(I-Q)N(\eps,k, \a_1 U_1+\a_3 U_3 + V)|_{\alpha_1=0}=(I-Q)[-k^2\partial_\xi^2\big([ -(1+k^2\partial_\xi^2)^2\\
&\epsilon^2-(2s \a_3+ 3\a_3^2)] (\partial^3_{\alpha_1}V)-6U_1^3-3(2s+6\a_3)U_1\partial^2_{\alpha_1}V \big)].
\end{split}
\ee

It follows from \eqref{partiala11}  that
\be\label{partial1^3ccc}
\begin{split}
(I-Q)[-k^2\partial_\xi^2\big( -(1+k^2\partial_\xi^2)^2+\epsilon^2-(2s \a_3+ 3\a_3^2))] (\partial^3_{\alpha_1}V)=(\ldots)\cos 3\xi.
\end{split}
\ee

Since $span\{\cos 3\xi\}$ is an invariant subspace for the invertible operator $(I-Q)[-k^2\partial_\xi^2( -(1+k^2\partial_\xi^2)^2+\epsilon^2-(2s \a_3+ 3\a_3^2))](I-P)$ $\partial^3_{\alpha_1}V|_{\alpha=0}$ is of the form $b(\a_3)\cos 3\xi$.

So far, we have shown that 
 \be
 \begin{split}\label{V}
 &V(b,k,\a_1,0,\a_3)=\frac{1}{2}\big(\frac{s+3\a_3}{ -(1-4k^2)^2+\epsilon^2-(2s\a_3+3\a_3^2)}\cos 2\xi\big)\alpha_1^2\\
 &+\frac{1}{6}(b(\a_3)\cos 3\xi)\alpha_1^3+\mathcal{O}(|\a_1|^4).
 \end{split}
 \ee

Let $f$ denote the coefficient in front of $\alpha_1^2$ in \eqref{V}, that is,
\be
\begin{split}
	&f=\frac{s+3\a_3}{2( -(1-4k^2)^2+\epsilon^2-(2s\a_3+3\a_3^2))}.
\end{split}
\ee

In order to obtain the reduced system, we substitute \eqref{V} into the first equation from \eqref{projection equations}, obtaining for $u=\a_1 U_1 + \a_3 U_3+ V$ the equation

\be\label{QN}
 \tilde QN(b,k, \a_1 U_1 + \a_3 U_3+ V) =\tilde Q \Big[-k^2\partial_\xi^2\big[ \big(-(1+k^2\partial_\xi^2)^2+\eps^2\big) (\a_1 U_1 + \a_3 U_3+ V)-s u^2- u^3  \big] \Big]=0.
\ee

Next, we split the left-hand side of \eqref{QN} into two parts.

a.) {\bf Linear part.}

We have the following expression for the linear part from \eqref{QN}
\be
\begin{split}
& \tilde Q \Big[-k^2\partial_\xi^2\big[\big(-(1+k^2\partial_\xi^2)^2+\eps^2\big) (\a_1 U_1 + \a_3 U_3+ V)\big] \Big]=(-(1-k^2)^2+\eps^2)k^2\a_1  +\mathcal{O}(|\a_1|^4)\bp 1 \\ 0\\0 \ep.
\end{split}
\ee

b.) {\bf Non-linear part.}
We next treat the nonlinear terms
\be\label{nonlinear}
\begin{split}
&\tilde Q \Big[-k^2\partial_\xi^2\big[-s (\a_1 U_1 + \a_3 U_3+ V)^2- (\a_1 U_1 + \a_3 U_3+ V)^3    \big] \Big]=[(-2s-3\a_3)k^2\a_3\a_1\\
&+(-sf-\frac{3}{4}-3\a_3f)k^2\a_1^3+\mathcal{O}(|\a_1|^4)]\bp 1 \\ 0\\0 \ep.
\end{split}
\ee    
  
Hence, taking into account formulas \eqref{be} and \eqref{QN}, the nontrivial part of the reduced system has the form:
\be\label{reduced}
\begin{split}
&\big\{ [(-(1-k^2)^2+\eps^2-2s\a_3-3\a_3^2)] k^2  - 
(\frac{3}{4}+sf+3\a_3f)k^2(\a_1^2+\a_2^2)  +\mathcal{O}(|\a|^4)\big\}\bp \a_1 \\ \a_2 \ep=0.
\end{split}
\ee
Let us introduce $\mathcal{A}$:
\be
\begin{split}
	\mathcal{A}=&\frac{-(1-k^2)^2+\eps^2-2s\a_3-3\a_3^2}{\frac{3}{4}+sf+3\a_3f}  ,
\end{split}
\ee
Following [MC00] we take  $\a_2=0$, $\a_3=0$ and $\a_1=\a$. Then

\be
\begin{split}
	\mathcal{A}=&\frac{-(1-k^2)^2+\eps^2}{\frac{3}{4}+sf}  ,
\end{split}
\ee
where 
\be
\begin{split}
	&f=\frac{s}{2( -(1-4k^2)^2+\epsilon^2)}.
\end{split}
\ee
Or,
\be\label{Aasymp}
\begin{split}
	\mathcal{A}=&\frac{4( -(1-4k^2)^2+\epsilon^2)}{3(-(1-4k^2)^2+\epsilon^2)+2s^2} (\eps^2-(1-k^2)^2)=\big(\frac{36}{27-2s^2}+\mathcal O(k-1,\eps)\big)(\eps^2-(1-k^2)^2).
\end{split}
\ee
Fron now on, we will assume that $27-2s^2>0$.\\
Our goal is to solve \eqref{reduced} for $\a$ in terms of $\eps$ and $k$. 

Solving \eqref{reduced} is equivalent to solving
\be\label{short}
\begin{split}
&\mathcal{A}-\a^2  +\mathcal{O}(|\a|^4)=0.
\end{split}
\ee
Next, plugging $\a=\sqrt{|\mathcal{A}|}\mathcal{B}$ into \eqref{short}, we obtain
\be
\begin{split}
&\mathcal{A}-|\mathcal{A}|\mathcal{B}^2  +\mathcal{O}(|\mathcal{A}|^2)=0,
\end{split}
\ee
or
\be
\begin{split}\label{B}
&\mathcal{A}(1-\mathcal{B}^2+\mathcal{O}(|\mathcal{A}|))=0\,\,\hbox{if}\,\, \mathcal{A}\geq0,\\
&\mathcal{A}(1+\mathcal{B}^2+\mathcal{O}(|\mathcal{A}|))=0\,\,\hbox{if}\,\, \mathcal{A}\leq0.
\end{split}
\ee
We need to solve \eqref{B} in terms of $\mathcal{A}$. The second equation in \eqref{B} has no solutions. By the Implicit Function Theorem, there exists  an open neighborhood $U\subset\mathbb{R}$ of $0$ and a unique function $\mathcal{B}: U\to\R$ that solves the first equation of \eqref{B} for $\mathcal{A}\in U$. Therefore, we have the restriction $\mathcal{A}\geq0$. Hence, using formula \eqref{Aasymp}, the restrictions on $s$ and $\mathcal{A}$, we conclude that $\eps^2-(1-k^2)^2$ must be greater or equal to $0$. Next, we introduce a scaling parameter $\omega$ defined by the equation
\be
\begin{split}\label{w}
1-k^2=-2\omega\eps.
\end{split}
\ee
Therefore,
\be
\begin{split}\label{k1}
k=\sqrt{1+2\omega\eps}.
\end{split}
\ee
Then, 
\be
\begin{split}
\mathcal{A}\geq0\,\,\hbox{if and only if}\,\, k=\sqrt{1+2\omega\eps}\,\,\hbox{and}\,\,\omega\in[-\frac{1}{2},\frac{1}{2}].
\end{split}
\ee
Note that when $\omega=\pm\frac{1}{2}$, $\mathcal{A}=0$.

Next, using the first equation in \eqref{B}, we arrive at the asymptotic formula for $\mathcal{B}$:
\be
\begin{split} 
\mathcal{B}=1+\mathcal{O}(\mathcal{A}),
\end{split}
\ee
which implies that
\be\label{alphatoo}
\a=\sqrt{|\mathcal{A}|}\mathcal{B}=\sqrt{\mathcal{A}}+\mathcal{O}(\mathcal{A}^{3/2}).
\ee
Since, $k$ is a function of $\eps$. $\mathcal{A}$ is a function of $\eps$ as well. In particular,
\be
\begin{split} \label{Acal}
	\mathcal{A}=\frac{36(1-4\omega^2)}{27-2s^2}\eps^2-\frac{384\omega s^2(1-4\omega^2)}{(27-2s^2)^2}\eps^3+\mathcal{O}(\eps^3).
\end{split}
\ee
Therefore, using \eqref{alphatoo}, we arrive at the asymptotic formula for $\a$:
\be
\begin{split} \label{alpha}
	\a=6\sqrt{\frac{1-4\omega^2}{27-2s^2}}\eps-32\omega s^2\sqrt{\frac{1-4\omega^2}{(27-2s^2)^3}}\eps^2+\mathcal{O}(\eps^2).
\end{split}
\ee
Note that when $\omega=\pm\frac{1}{4}$, $\a=0$.

Using formulas \eqref{V} and \eqref{alpha}, we 
obtain the result of Theorem \ref{existence}.


\section{Stability of periodic solutions }\label{s:stability}
In this section we study stability of the bifurcating periodic solutions established in Section \ref{s:existence}, carrying out the proof of Theorem \ref{stabthm}.
Linearizing \eqref{sh1c} about $\tilde u_{\eps, \omega, s}$, we have
\be 
\hat B_{\eps, \omega, s}(\partial_\xi)v:=-k^2\partial_\xi^2\big[ -(1+k^2\partial_\xi^2)^2+df(\tilde u_{\eps, \omega, s}) \big]v,
\ee
where $\hat B_{\eps, \omega, s}(\partial_\xi):\dom(\hat B_{\eps, \omega, s}(\partial_\xi))=H^6(\mathbb{R})\subset L^2(\mathbb{R})\to L^2(\mathbb{R})$ and 
\be\label{df}
\begin{split}
df(\tilde u_{\eps, \omega, s})
& = \eps^2-2s\tilde u_{\eps, \omega, s}-3\tilde u^2_{\eps, \omega, s}\\
& = -12s\sqrt{\frac{1-4\omega^2}{27-2s^2}}\cos\xi\eps+(1-\frac{54(1-4\omega^2)}{27-2s^2}+64\omega s^3\sqrt{\frac{1-4\omega^2}{(27-2s^2)^3}}\cos\xi\\
&-2(1-4\omega^2)\cos 2\xi)\eps^2+\mathcal O(\eps^3).
\end{split}
\ee
Therefore,
\be \label{linearization about tilde u}
\begin{split}
\hat B_{\eps, \omega, s}(\partial_\xi)v&=-\partial_\xi^2\big[ -(1+\partial_\xi^2)^2+\{-2\omega(1+\partial_\xi^2)^2-4\omega\partial_\xi^2(1+\partial_\xi^2)-12s\sqrt{\frac{1-4\omega^2}{27-2s^2}}\cos\xi\}\eps\\
&+\{-8\omega^2\partial_\xi^2(1+\partial_\xi^2)-4\omega^2\partial_\xi^4+1-\frac{54(1-4\omega^2)}{27-2s^2}+8\omega s(14s^2-81)\sqrt{\frac{1-4\omega^2}{(27-2s^2)^3}}\cos\xi\\
&-2(1-4\omega^2)\cos 2\xi\}\eps^2+\mathcal O(\eps^3)\big]v.
\end{split}
\ee
Since $df(\tilde u_{\eps, \omega, s})$ is $2\pi$-periodic, every coefficient of the linear operator $\hat B_{\eps, \omega, s}$ is $2\pi$-periodic. By substituting $v(\xi)= e^{i \sigma \xi} V(\xi)$ we define the Bloch operator family: for $\sigma \in \RR$,
\be\label{Bloch}
B(\eps, \omega, s, \sigma)V= -k^2(\partial_\xi + i\sigma)^2\big[ -(1+k^2(\partial_\xi + i\sigma)^2)^2+df(\tilde u_{\eps, \omega, s}) \big]V,
\ee
where $B(\eps, \omega, s,\sigma) : L^2_{per} ([0,2\pi])\subset H^6_{per}([0,2\pi]) \longrightarrow  L^2_{per} ([0,2\pi])$ and $k=1+\omega\eps+\mathcal O(\eps^2)$.

However, in order to study the spectral stability of $\tilde u_{\eps, \omega, s}$, it is enough to consider $\sigma \in [-\frac{1}{2},\frac{1}{2})$ because for any $\sigma \in \RR$, $\sigma = \sigma^*+m$, where $\sigma^* \in  [-\frac{1}{2},\frac{1}{2})$ and $m \in \ZZ$; hence we consider $e^{im\xi}V(\xi)$ instead of $V(\xi)$. We now  define the operator $B_0$:
\be
B_0(\sigma):=B(0,\omega,s,\sigma)=-(\partial_\xi + i\sigma)^2\big[ -(1+(\partial_\xi + i\sigma)^2)^2\big],
\ee
which has constant coefficients. Here, we consider Bloch operators $B(\eps, \omega, s,\sigma)$ as small perturbations of $B_0(\sigma)$. So we first study the eigenvalue problem of $B_0(\sigma)$:
\be
B_0(\sigma) e^{im\xi} = \mu_m  e^{im\xi}.
\notag
\ee
It is clear that $\mu_m=(m+\sigma)^2\big[ -(1-(m + \sigma)^2)^2\big]$. Therefore, as long as $\sigma$ is bounded away from $0$, the spectrum of $B_0(\sigma)$ has negative upper bound. Moreover,
$B(\eps, \omega, s, \sigma)-B_0(\sigma)$ represents a small perturbation, that is, $B(\eps, \omega, s, \sigma)-B_0(\sigma)$ is $B_0(\sigma)$-bounded (i.e. $\dom(B_0(\sigma))\subset\dom\big(B(\eps, \omega, s, \sigma)-B_0(\sigma)\big)$, $\|\big(B(\eps, \omega, s, \sigma)-B_0(\sigma)\big)f\|\leq a(\eps,\sigma)\|f\|+b(\eps,\sigma)\|B_0(\sigma)f\|$ for any $f\in\dom(B_0(\sigma))$, and $a(\eps,\sigma),b(\eps,\sigma)\to0$ as $\eps\to0$). Then if $\lambda\in\rho((B_0(\sigma))$, we arrive at
\be
\begin{split}
 B(\eps, \omega, s, \sigma)-\lambda&=B_0(\sigma)-\lambda+B(\eps, \omega, s, \sigma)-B_0(\sigma)\\
 &=[I+(B(\eps, \omega, s, \sigma)-B_0(\sigma))(B_0(\sigma)-\lambda)^{-1}](B_0(\sigma)-\lambda).
\end{split}
\ee
Note that $(B(\eps, \omega, s, \sigma)-B_0(\sigma))(B_0(\sigma)-\lambda)^{-1}\in\mathcal{B}(L^2_{per} ([0,2\pi]))$. And,
\be
\begin{split}
\|(B(\eps, \omega, s, \sigma)-B_0(\sigma))(B_0(\sigma)-\lambda)^{-1}f\|&\leq a(\eps,\sigma)\|(B_0(\sigma)-\lambda)^{-1}f\|\\
&+b(\eps,\sigma)\|B_0(\sigma)(B_0(\sigma)-\lambda)^{-1}f\|.
\end{split}
\ee
Since $\sigma\in[-\frac{1}{2},\frac{1}{2}]$ and $a(\eps,\sigma),b(\eps,\sigma)\to0$ as $\eps\to0$, using the standard resolvent estimates for the operator $B_0(\sigma)$ (cf. \cite[Lemma 4.3]{EV87}, \cite[Theorem VIII.17]{RS80}),  we conclude that the real part of the spectrum of $B(\eps, \omega, s,\sigma)$ has negative upper bound for $\sigma \in [-\frac{1}{2},\frac{1}{2})\setminus \Gamma $, where $\Gamma= \{ \sigma | -\eta <\sigma < \eta \}$, $\eta$ is sufficiently small.

 From now on, we consider the following ``dangerous set''
 \be
 \Gamma= \{ \sigma | -\eta <\sigma < \eta \}
 \ee
 for some sufficiently small $\eta >0$.
\bigskip

\subsection{Stability of the bifurcating periodic solutions: coperiodic case $\sigma=0$}

We now consider the eigenvalue problem of $B(\eps, \omega, s,0)$:
\be
0=\Big[ B(\eps, \omega, s,0)-\l I \Big] W.
\ee
In order to use the Lyapunov-Schmidt reduction, we decompose $W = \b_1 U_{1}+\b_{2} U_{2} + \mathcal V$ and we first solve
\be\label{lambda}
\begin{split}
	0
	& =(I-Q)\Big[ B(\eps, \omega, s,0)-\l I \Big] (\b_1 U_{1}+\b_{2} U_{2}+\b_{3} U_{3} + \mathcal V),
\end{split}
\ee

where
\be
(I-Q)\Big[ B(\eps, \omega, s,0)-\l I \Big] (\b_1 U_{1}+\b_{2} U_{2}+\b_{3} U_{3} + \mathcal V):\mathbb{R}\times\mathbb{C}\times\mathbb{R}^3\times\ran(I-P)\to\ran(I-Q).
\ee

By the Implicit Function Theorem, there exists  an open neighborhood $U\subset\mathbb{R}\times\mathbb{C}\times\mathbb{R}^3$ of $(0,0,0,0,0)$ and a unique function $\mathcal V: U\to(I-P)H^6_{per}([0,2\pi], \RR)$ that solves  \eqref{lambda} for $(\eps,\lambda,\b_1,\b_2,\b_3)\in U$.

Next, it is clear that the relation between $\b$ and $\mathcal V$ is linear. Then, let $\mathcal V(\eps, \omega, s,\lambda,\b)=\mathcal V_1(\eps, \omega, s,\lambda)\b_1+\mathcal V_2(\eps, \omega, s,\lambda)\b_2+\mathcal V_3(\eps, \omega, s,\lambda)\b_3$. Now let us find asymptotic expansions of $\mathcal V_1$, $\mathcal V_2$ and $\mathcal V_3$ with respect to parameter $\eps$.\\
1.) First, we compute $\mathcal V_i(0,\omega,\lambda)=\partial_{\b_i}\mathcal V|_{\eps=0}$. We differentiate \eqref{lambda} with respect $\b_i$ and plug in $0$ for $\eps$.

\be
\begin{split}
	0
	& =(I-Q)\Big[ B(0,\omega,0)-\l I \Big] (U_{i} + \partial_{\b_i}\mathcal V|_{\eps=0}).
\end{split}
\ee
Notice that $ B(0,\omega,0)  U_{i}=L_{per}U_{i}=0$ and $(I-Q)U_i=0$. Since $(I-Q)\Big[ B(0,\omega,0)-\l I \Big](I-P)$ is invertible for small values of $\lambda$, we conclude that
\be
\mathcal V_i(0,\omega,\lambda)=\partial_{\b_i}\mathcal V|_{\eps=0}=0.
\ee
2.) Now, we differentiate the second equation of \eqref{lambda} with respect to $\b_1$ and $\eps$ and, then, plug in $0$ for $\eps$

\be
\begin{split}
	0
	& =(I-Q)\partial_{\eps}B(0,\omega,0)U_{1} + (I-Q)(B(0,\omega,0)-\lambda)\partial_{\eps}\partial_{\b_1}\mathcal V|_{\eps=0},
\end{split}
\ee
or
\be
\begin{split}
	(I-Q)(B(0,\omega,0)-\lambda)\partial_{\eps}\partial_{\b_1}\mathcal V|_{\eps=0}
	& =-(I-Q)\partial_{\eps}B(0,\omega,0)U_{1}.
\end{split}
\ee
Taking into account formulas \eqref{linearization about tilde u} and \eqref{Bloch}, we arrive at
\be\label{ebetta}
\begin{split}
	&-(I-Q)\partial_{\eps}B(0,\omega,0)U_{1}=-(I-Q)(-\partial_\xi^2\big[ -(1+\partial_\xi^2)^2+\{-2\omega(1+\partial_\xi^2)^2-4\omega\partial_\xi^2(1+\partial_\xi^2)\\
	&-12s\sqrt{\frac{1-4\omega^2}{27-2s^2}}\cos\xi\}\big])U_1=-12s\sqrt{\frac{1-4\omega^2}{27-2s^2}}(I-Q) \partial^2_\xi (\cos^2\xi)=24s\sqrt{\frac{1-4\omega^2}{27-2s^2}}\cos 2\xi.
\end{split}
\ee
Since $span\{\cos 2\xi\}$ is an invariant subspace for the invertible operator $(I-Q)(B(0,\omega,0)-\lambda)(I-P)$, $\partial_{\eps}\partial_{\b_1}\mathcal V|_{\eps=0}$ is of the form $a \cos 2\xi$.\\
Next, note that
\be
\begin{split}\label{h1}
&(I-Q)(B(0,\omega,0)-\lambda)(a\cos 2\xi)=a(I-Q)\big(\partial_\xi^2(1+\partial_\xi^2)^2-\lambda\big) (\cos 2\xi)\\
&=a(-36-\lambda)\cos 2\xi
\end{split}
\ee

Using \eqref{ebetta}-\eqref{h1}, we derive that
$
a(-36-\lambda)=24s\sqrt{\frac{1-4\omega^2}{27-2s^2}}.
$
Hence,
\be
\partial_{\eps}\partial_{\b_1}\mathcal V|_{\eps=0}=\frac{24s}{-36-\lambda}\sqrt{\frac{1-4\omega^2}{27-2s^2}}  \cos 2\xi.
\ee
Similarly,
$
\partial_{\eps}\partial_{\b_2}\mathcal V|_{\eps=0}=\frac{24s}{-36-\lambda}\sqrt{\frac{1-4\omega^2}{27-2s^2}}  \sin 2\xi$, and it is also clear that $\partial_{\eps}\partial_{\b_3}\mathcal V|_{\eps=0}=0$.

So far, we have shown that 
$$
\mathcal V= (\frac{24s}{-36-\lambda}\sqrt{\frac{1-4\omega^2}{27-2s^2}}  \cos 2\xi\eps+\mathcal O(\eps^2))\b_1+ (\frac{24s}{-36-\lambda}\sqrt{\frac{1-4\omega^2}{27-2s^2}}  \sin 2\xi  \eps+\mathcal O(\eps^2))\b_2+\mathcal O(\eps^2)\b_3.
$$
3.) Now, we would like to compute $\partial^2_{\eps}\partial_{\b_i}\mathcal V|_{\eps=0}$. Differentiating the second equation of \eqref{lambda} with respect to $\b_1$ and $\eps$ twice and, then, plugging in $0$ for $\eps$, we obtain
$$
	0
	 =(I-Q)\partial^2_{\eps}B(0,\omega,0)U_{1} + 2(I-Q)\partial_{\eps}B(0,\omega,0)\partial_{\eps}\partial_{\b_1}\mathcal V|_{\eps=0}+(I-Q)(B(0,\omega,0)-\lambda)\partial^2_{\eps}\partial_{\b_1}\mathcal V|_{\eps=0},
$$
or
$$
	(I-Q)(B(0,\omega,0)-\lambda)\partial^2_{\eps}\partial_{\b_1}\mathcal V|_{\eps=0}
	 =-(I-Q)\partial^2_{\eps}B(0,\omega,0)U_{1} - 2(I-Q)\partial_{\eps}B(0,\omega,0)\partial_{\eps}\partial_{\b_1}\mathcal V|_{\eps=0}.
$$
Therefore,
\be
\begin{split}
	&(I-Q)(B(0,\omega,0)-\lambda)\partial^2_{\eps}\partial_{\b_1}\mathcal V|_{\eps=0}
	 =-(I-Q)\partial^2_{\eps}B(0,\omega,0)U_{1} - 2(I-Q)\partial_{\eps}B(0,\omega,0)\partial_{\eps}\partial_{\b_1}\mathcal V|_{\eps=0}\\
	&=-2(I-Q)\big\{-\partial_\xi^2\big[-8\omega^2\partial_\xi^2(1+\partial_\xi^2)-4\omega^2\partial_\xi^4+1-\frac{54(1-4\omega^2)}{27-2s^2}+8\omega s(14s^2-81)\sqrt{\frac{1-4\omega^2}{(27-2s^2)^3}}\cos\xi\\
	&-2(1-4\omega^2)\cos 2\xi\big]U_1-\partial_\xi^2\big[ -(1+\partial_\xi^2)^2+\{-\omega(1+\partial_\xi^2)^2-2\omega\partial_\xi^2(1+\partial_\xi^2)\\
	&-12s\sqrt{\frac{1-4\omega^2}{27-2s^2}}\cos\xi\}\big]\partial_{\eps}\partial_{\b_1}\mathcal V|_{\eps=0}\big\}=(*)\cos2\xi+(*)\cos3\xi.
\end{split}
\ee		
Hence, $\partial^2_{\eps}\partial_{\b_1}\mathcal V|_{\eps=0}$ is of the form $(*)\cos2\xi+(*)\cos3\xi$. Similarly, $\partial^2_{\eps}\partial_{\b_2}\mathcal V|_{\eps=0}$ is of the form $(*)\sin2\xi+(*)\sin3\xi$, and $\partial^2_{\eps}\partial_{\b_3}\mathcal V|_{\eps=0}$  is of the form $(*)\cos2\xi$. \\
 Therefore,

	\be\label{Ve2}
	\begin{split}
\mathcal V&= (\frac{24s}{-36-\lambda}\sqrt{\frac{1-4\omega^2}{27-2s^2}}  \cos 2\xi\eps+[(*)\cos2\xi+(*)\cos3\xi]\eps^2+\mathcal O(\eps^3))\b_1\\
&+ (\frac{24s}{-36-\lambda}\sqrt{\frac{1-4\omega^2}{27-2s^2}}  \sin 2\xi\eps+  [(*)\cos2\xi+(*)\cos3\xi]\eps^2+\mathcal O(\eps^3))\b_2+((*)\cos 2\xi\eps^2+\mathcal O(\eps^3))\b_3.
\end{split}
\ee

In order to obtain the reduced system for the spectral problem, we substitute $W = \b_1 U_{1}+\b_{2} U_{2} +\b_3 U_{3}+ \mathcal V$, where $\mathcal V$ is given by \eqref{Ve2} into the equation 

\be\label{sred}
\begin{split}
	0
	& =\tilde Q\Big[ B(\eps, \omega, s,0)-\l I \Big] W,
\end{split}
\ee
that is,
\be
\begin{split}
	0
	& =\tilde Q\Big[ B(\eps, \omega, s,0)-\l I \Big](\b_1 U_{1}+\b_{2} U_{2}+\b_3 U_{3})+\tilde Q\Big[ B(\eps, \omega, s,0)-\l I \Big]\mathcal V\\
	&=\big[\bp M_{11} & M_{12}&M_{13}\\ M_{21} & M_{22}& M_{23}\\
	M_{31}&M_{32} &M_{33}\ep+\bp \tilde M_{11} & \tilde M_{12}&\tilde M_{13}\\\tilde M_{21} &\tilde M_{22}&\tilde M_{23}\\
	\tilde M_{31}&\tilde M_{32} &\tilde M_{33}\ep+
	remainder\big]\bp \b_1 \\ \b_2\\ \b_3\ep,
\end{split}
\ee
where 
\be
\begin{split}
M_{11}=&\big[(1-4\omega^2)-\frac{54(1-4\omega^2)}{27-2s^2}-(1-4\omega^2)+\frac{4s^2(1-4\omega^2)}{27-2s^2}\big]\eps^2-\lambda,\,\,M_{12}=0,\\
M_{13}=&-12s\sqrt{\frac{1-4\omega^2}{27-2s^2}}\eps+8\omega s(14s^2-81)\sqrt{\frac{1-4\omega^2}{(27-2s^2)^3}}\eps^2,\,\,M_{21}=0, \\
 M_{22}=&\big[(1-4\omega^2)-\frac{54(1-4\omega^2)}{27-2s^2}+(1-4\omega^2)+\frac{4s^2(1-4\omega^2)}{27-2s^2}\big]\eps^2-\lambda,\,\,M_{23}=0,\\
 M_{31}=&M_{32}=0,\,\,M_{32}=-\lambda,\\
 \tilde M_{11}=&\frac{-144s^2(1-4\omega^2)}{(-36-\lambda)(27-2s^2)}\eps^2, \,\,\tilde M_{12}=0,\,\,\tilde M_{13}=0,\\
 \tilde M_{21}=& 0,\,\,\tilde M_{22}=\frac{-144s^2(1-4\omega^2)}{(-36-\lambda)(27-2s^2)}\eps^2, \,\,\tilde M_{23}=0,\,\,M_{3i}=0,\\
 remainder=&\mathcal O(\eps^3).
\end{split}
\ee
Or,
\be\label{spmatrix}
\begin{split}
	0
	&=\bp -2(1-4\omega^2)\eps^2-\lambda & 0 &-12s\sqrt{\frac{1-4\omega^2}{27-2s^2}}\eps+8\omega s(14s^2-81)\sqrt{\frac{1-4\omega^2}{(27-2s^2)^3}}\eps^2\\ 0 & -\lambda&0\\
	0&0&-\lambda \ep\bp \b_1 \\ \b_2 \\\b_3\ep\\
	&+
	\bp \mathcal O(\eps^2(\eps+|\lambda|)) & \mathcal O(\eps^3) & \mathcal O(\eps^3)\\ \mathcal O(\eps^3) & \mathcal O(\eps^2(\eps+|\lambda|))& \mathcal O(\eps^3)\\ \mathcal O(\eps^3) & \mathcal O(\eps^3)& \mathcal O(\eps^3)\ep\bp \b_1 \\ \b_2\\\b_3  \ep.
\end{split}
\ee
Now, we will establish the following refined remainder estimate.
\begin{lemma}\label{remainder}
The remainder in \eqref{spmatrix} has the form
\be
\begin{split}
	\bp \mathcal O(\eps^2(\eps+|\lambda|)) & \mathcal O(\eps^3) & \mathcal O(\eps^3)\\ \mathcal O(\eps^3) & \mathcal O(\eps^2(\eps+|\lambda|))& \mathcal O(\eps^3)\\ \mathcal O(\eps^3) & \mathcal O(\eps^3)& \mathcal O(\eps^3)\ep=\bp \mathcal O(\eps^2(\eps+|\lambda|)) & \mathcal O(\eps^3|\lambda|) & \mathcal O(\eps^3(1+|\lambda|))\\  \mathcal O(\eps^3|\lambda|) & \mathcal O(\eps^2|\lambda|)&  \mathcal O(\eps^3|\lambda|)\\  \mathcal O(\eps^3|\lambda|) &  \mathcal O(\eps^3|\lambda|)&  \mathcal O(\eps^3|\lambda|)\ep
\end{split}
\ee
\end{lemma}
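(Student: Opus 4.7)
The plan is to exploit three structural symmetries of the equation \eqref{sh1c} which the naive $\mathcal O(\eps^3)$ bookkeeping ignores: reflection $\xi\mapsto-\xi$, translation invariance in $\xi$, and the conservation-law prefactor $-k^2\partial_\xi^2$ appearing in $\hat B_{\eps,\omega,s}$. Each symmetry forces a specific entry of the reduced matrix to vanish exactly (either identically in $\lambda$ or only at $\lambda=0$), and the refined remainder bounds follow by combining these exact identities with the joint analyticity of the Lyapunov--Schmidt correctors $\mathcal V_j(\eps,\lambda)$.

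\smallskip\noindent\textbf{Reflection.} Since $\tilde u_{\eps,\omega,s}$ is even in $\xi$ by Theorem \ref{existence}, the operator $\hat B_{\eps,\omega,s}(\partial_\xi)$ commutes with the reflection $R:\xi\mapsto-\xi$, and since $RU_j=\pm U_j$ according to parity, the projections $P,Q$ also commute with $R$. Uniqueness in the implicit function theorem defining $\mathcal V_j$ then forces $R\mathcal V_j=\pm\mathcal V_j$ with the sign matching the parity of $U_j$; i.e., $\mathcal V_1,\mathcal V_3$ are even and $\mathcal V_2$ is odd. Consequently $\langle U_i,(\hat B-\lambda)(U_j+\mathcal V_j)\rangle$ vanishes identically whenever $U_i$ and $U_j$ have opposite parities, so $M_{12}=M_{21}=M_{23}=M_{32}\equiv 0$, which gives four exact zeros in the refined remainder (certainly compatible with the claimed $\mathcal O(\eps^3|\lambda|)$).

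\smallskip\noindent\textbf{Conservation law.} The prefactor $-k^2\partial_\xi^2$ in $\hat B_{\eps,\omega,s}$ yields $\langle U_3,\hat B v\rangle=\tfrac{1}{\pi}\int_0^{2\pi}\hat B v\,d\xi=0$ for every $v\in H^6_{per}$, while $\langle U_3,\mathcal V_j\rangle=0$ for every $j$ because $\mathcal V_j\in\ran(I-P)$ is $L^2$-orthogonal to $\ker(L_{per})=\Span\{U_1,U_2,U_3\}$ (the kernel is self-adjoint). Hence the third component of $\tilde Q(\hat B-\lambda)(U_j+\mathcal V_j)$ reduces exactly to $-\lambda\delta_{j3}$, so $M_{31}=M_{32}=0$ and $M_{33}=-\lambda$ identically; the entire third row of the remainder vanishes, which is strictly stronger than the claimed $\mathcal O(\eps^3|\lambda|)$.

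\smallskip\noindent\textbf{Translation.} Translation invariance of \eqref{sh1c} implies $\partial_\xi\tilde u_{\eps,\omega,s}\in\ker\hat B_{\eps,\omega,s}(\partial_\xi)$, so $\lambda=0$ is an eigenvalue of $B(\eps,\omega,s,0)$. Since $\partial_\xi\tilde u_{\eps,\omega,s}$ is odd and nonzero to leading order in $\eps$ for $\omega\in(-\tfrac12,\tfrac12)$, its $P$-projection is $c_2U_2$ with $c_2\ne 0$; uniqueness in the reduction then forces $c_2\mathcal V_2(\eps,0)=(I-P)\partial_\xi\tilde u_{\eps,\omega,s}$, and applying $\tilde Q\hat B$ to the kernel vector $\partial_\xi\tilde u_{\eps,\omega,s}$ yields $M(\eps,0)(0,c_2,0)^T=0$. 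Combined with the reflection decoupling above (so that the second column of $M(\eps,0)$ is $(0,M_{22}(\eps,0),0)^T$), this gives $M_{22}(\eps,0)=0$, i.e.\ $r_{22}(\eps,0)=0$. The main technical obstacle is then to promote this pointwise identity to the joint bound $r_{22}(\eps,\lambda)=\mathcal O(\eps^2|\lambda|)$; the plan is to use the joint analyticity of $\mathcal V_2(\eps,\lambda)$ (inherited from the $C^\omega$ character of $N$) to expand $r_{22}(\eps,\lambda)=\sum_{k\ge 1}\lambda^k a_k(\eps)$ and to apply Cauchy's integral formula on a fixed small $\lambda$-disk, invoking the pre-existing bound $r_{22}=\mathcal O(\eps^2(\eps+|\lambda|))$ on that disk to force $|a_k(\eps)|=\mathcal O(\eps^2)$ uniformly in $k$, so that $r_{22}=\lambda\cdot\mathcal O(\eps^2)=\mathcal O(\eps^2|\lambda|)$. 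The degenerate cases $\omega=\pm\tfrac12$, where $\tilde u\equiv 0$, $\mathcal V_j\equiv 0$, and the reduced matrix is $-\lambda I$, are trivial.
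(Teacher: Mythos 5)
Your proof is correct, but it reaches the refined estimates by a genuinely different route than the paper. The paper's proof pivots entirely on the identity between the spectral reduction at $\lambda=0$ and the existence reduction: by uniqueness in the Implicit Function Theorem it identifies $\partial_{\b_i}\mathcal V(\eps,\omega,s,0,\b)$ with $\partial_{\a_i}V|_{\a_2=\a_3=0}$, so that the reduced spectral matrix at $\lambda=0$ is exactly the Jacobian of the already-computed bifurcation system \eqref{reduced}; the vanishing of $m_{22}(\eps,0)$ then comes from the observation that $\tfrac{1}{3/4+sf}\,m_{22}$ coincides with the left-hand side of \eqref{short}, which is zero at the bifurcating solution, and the third row and off-diagonal zeros are read off from the trivial part of \eqref{reduced}. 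You instead extract the exact zeros directly from symmetries of the linearized operator: reflection commutation kills the even--odd couplings identically (not just at $\lambda=0$), the $-k^2\partial_\xi^2$ prefactor plus $L^2$-orthogonality of $\mathcal V_j$ to $\ker L_{per}$ makes the third row exactly $(0,0,-\lambda)$, and the Goldstone mode $\partial_\xi\tilde u_{\eps,\omega,s}$ (odd, with nonzero $U_2$-component, uniquely matched to $\mathcal V_2(\eps,0)$ by the IFT) forces $M_{22}(\eps,0)=0$ --- which is of course the spectral-side manifestation of the same $O(2)$-equivariance the paper exploits on the existence side. Your approach buys exact vanishing (stronger than $\mathcal O(\eps^3|\lambda|)$) for six of the nine entries and, importantly, makes explicit the step the paper leaves implicit: the promotion of pointwise vanishing at $\lambda=0$ to a uniform $\mathcal O(|\lambda|)$ factor via analyticity of $\mathcal V$ in $\lambda$ on a disk of $\eps$-independent radius (guaranteed by the $36$-sized spectral gap of $L_{per}$ on $\ran(I-P)$) together with a Schwarz-lemma/Cauchy estimate. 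The paper's route has the advantage of reusing the explicit computations of Section \ref{s:existence} to produce the precise leading entries in \eqref{m(eps)}, not merely the order bounds, which it needs downstream; your argument as stated establishes the remainder bounds of the Lemma but would need to be supplemented by those explicit $\lambda=0$ computations to recover \eqref{m(eps)} itself.
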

\begin{proof}
	All we need to show is that if $\lambda=0$, then the reduced spectral system $\tilde Q B(\eps, \omega, s,0) W$ is of the form
	\be\label{m(eps)}
	\begin{split}
		\bp -2(1-4\omega^2)\eps^2+\mathcal O(\eps^3) & 0 &-12s\sqrt{\frac{1-4\omega^2}{27-2s^2}}\eps+8\omega s(14s^2-81)\sqrt{\frac{1-4\omega^2}{(27-2s^2)^3}}\eps^2+\mathcal O(\eps^3)\\ 0 & 0&0\\
		0&0&0 \ep\bp \b_1 \\ \b_2 \\\b_3\ep\\.
	\end{split}
	\ee
	Now, we plug $0$ for $\lambda$ in \eqref{lambda} and then differentiate it with respect to $\b_1$.
	
	\be
	\begin{split}
		0
		& =(I-Q) B(\eps, \omega, s,0)( U_{1} + \partial_{\b_1}\mathcal V).
	\end{split}
	\ee
	Let us also  differentiate the second equation of \eqref{projection equations} with respect to $\alpha_1$ and then plug in $0$ for $\a_2$ and $\a_3$.
	\be
	\begin{split}
		& (I-Q)[-k^2\partial_\xi^2\big([ -(1+k^2\partial_\xi^2)^2+\epsilon^2] (U_1+\partial_{\alpha_1}V)\\
		&-(2s u+ 3u^2)(U_1+\partial_{\alpha_1}V)  \big)]=0,
	\end{split}
	\ee
	where $u=\a_1U_1+V$.
	Due to the uniqueness part in the Implicit Function Theorem, we conclude that 
	 \be
	 	\begin{split}
	 		\partial_{\b_1}\mathcal V(\eps, \omega, s,0,\b)=\partial_{\alpha_1}V|_{\alpha_2=\alpha_3=0}=\big(\frac{s}{ -(1-4k^2)^2+\epsilon^2}\cos 2\xi\big)\alpha_1+\mathcal{O}(|\a_1|^2).
	 	\end{split}
	 	\ee
	 	Similarly, we conclude that 
	 	\be
	 		 	\begin{split}
	 		 		\partial_{\b_2}\mathcal V(\eps, \omega, s,0,\b)=\partial_{\alpha_2}V|_{\alpha_2=\alpha_3=0}=\big(\frac{s}{ -(1-4k^2)^2+\epsilon^2}\sin 2\xi\big)\alpha_1+\mathcal{O}(|\a_1|^2).
	 		 	\end{split}
	 		 	\ee
	 		 	Therefore, in order to find the entries $(1,1)$, $(1,2)$ $(2,1)$ $(2,2)$ of the spectral matrix from the reduced system \eqref{sred} we differentiate the first and second equations of \eqref{reduced} with respect to $\a_1$ and $\a_2$ respectively, and then plug $0$ for $\a_2$ and $\a_3$. The trivial part of reduced system \eqref{reduced} implies that entries $(3,i)$ are equal to $0$. Also, one can compute entries $(1,3)$ and $(2,3)$ by differentiating the first and second equations of \eqref{reduced} with respect to $\a_3$ and then pluging $0$ for $\a_2$ and $\a_3$.
	 		 	  Hence,
	 		\be
	 		\begin{split}
	 		\tilde Q B(\eps, \omega, s,0) W=m\bp \b_1 \\ \b_2 \\\b_3\ep,
	 		\end{split}
	 		\ee 	  
	 		where 
	 			\be
	 			\begin{split}
	 			m_{11}&= [(-(1-k^2)^2+\eps^2] k^2  - 
	 				3(\frac{3}{4}+sf)k^2\a_1^2  +\mathcal{O}(|\a_1|^4),\, m_{12}=0,\\
	 				m_{13}&=-2s k^2\a_1  +\mathcal{O}(|\a_1|^3), \,\,m_{21}=0,\\
	 				m_{22}&= [(-(1-k^2)^2+\eps^2] k^2  - 
	 				(\frac{3}{4}+sf)k^2\a_1^2  +\mathcal{O}(|\a_1|^4),\, m_{23}=0,\,\,m_{3i}=0.\\
	 			\end{split}
	 			\ee

	 		Note that $\frac{1}{\frac{3}{4}+sf}m_{22}=\mathcal{A}-\a^2  +\mathcal{O}(|\a|^4)$ is exactly the left-hand side of \eqref{short}. Using formulas \eqref{Acal} and \eqref{alpha}, we arrive at formula \eqref{m(eps)} for the reduced spectral system $\tilde Q B(\eps, \omega, s,0) W$.
	 	
\end{proof}

Using the refined remainder estimate, we obtain the following characterization of co-periodic stability.

\begin{proposition}[Co-periodic stability]\label{coperprop}
	Let $u_{\eps, \omega, s}$ be the solution from Theorem \ref{existence}.  There exist
	 $\tilde\eps_0\in(0,\eps_0]$, where $\eps_0$ is taken from Theorem \ref{existence}, and $\delta>0$ such that for all $\eps\in[0,\tilde\eps_0)$ and all $\omega\in[-\frac{1}{2},\frac{1}{2}]$ the spectrum of $B(\eps, \omega, s,0)$ has the decomposition:
	\be
	\begin{split}
		\Sp(B(\eps, \omega, s,0))=S\cup\{\lambda_1,\lambda_2\},
	\end{split}
	\ee
	where
	\be
	\begin{split}
		\lambda_1(\eps, \omega, s)&=-2(1-4\omega^2)\eps^2+\mathcal O(\eps^3),\\
		\lambda_2(\eps, \omega, s)&=0,\\
		\lambda_3(\eps, \omega, s)&=0.
	\end{split}
	\ee
Moreover, if $\lambda\in S$, then $\Re\lambda<-\delta$.
\end{proposition}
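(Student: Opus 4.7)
My plan is to separate $\Sp(B(\eps,\omega,s,0))$ into a ``safe'' part $S$ uniformly bounded away from the imaginary axis and a cluster of at most three eigenvalues near $0$, and then locate the latter via the reduced $3\times 3$ system already developed. For the safe part, I would apply the resolvent perturbation argument from the preceding discussion to the constant-coefficient reference operator $B_0(0)=\partial_\xi^2(1+\partial_\xi^2)^2$ on $L^2_{per}([0,2\pi])$, whose Fourier-diagonal eigenvalues $\mu_m=-m^2(1-m^2)^2$ are strictly negative except for $m\in\{-1,0,1\}$, where they vanish. Fixing a disk around $0$ that isolates these three eigenvalues from the rest of $\Sp(B_0(0))\subset(-\infty,-36]$, the bound $\|(B(\eps,\omega,s,0)-B_0(0))(B_0(0)-\lambda)^{-1}\|\to 0$ as $\eps\to 0$ on the complement gives $\Re\lambda<-\delta$ on $S$ and, by the standard Kato projection/continuity argument, confines the remaining spectrum of $B(\eps,\omega,s,0)$ (counted with algebraic multiplicity) to the disk, with total multiplicity equal to $\dim\ker L_{per}=3$.

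Inside the disk, I would use the reduced system \eqref{spmatrix} obtained by writing $W=\beta_1 U_1+\beta_2 U_2+\beta_3 U_3+\mathcal V$, solving the $(I-Q)$-component for $\mathcal V$ (linear in $\beta$) by the Implicit Function Theorem, and then projecting via $\tilde Q$. The crucial input is Lemma \ref{remainder}: every entry of the second and third rows of the full $3\times 3$ reduced matrix carries an explicit factor of $\lambda$. Factoring this $\lambda$ out of rows two and three gives $\det A(\eps,\omega,s,\lambda)=\lambda^2\det\tilde A(\eps,\omega,s,\lambda)$ with
\[
\tilde A=\begin{pmatrix} -2(1-4\omega^2)\eps^2-\lambda+O(\eps^2(\eps+|\lambda|)) & O(\eps^3|\lambda|) & O(\eps) \\ O(\eps^3) & -1+O(\eps^2) & O(\eps^3) \\ O(\eps^3) & O(\eps^3) & -1+O(\eps^3) \end{pmatrix}.
\]
Cofactor expansion along the first row, noting that the lower-right $2\times 2$ minor of $\tilde A$ equals $1+O(\eps^2)$ while the cofactors of $\tilde A_{12}$ and $\tilde A_{13}$ are $O(\eps^3)$, yields $\det\tilde A=-\lambda-2(1-4\omega^2)\eps^2+O(\eps^3+\eps^2|\lambda|)$. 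The Implicit Function Theorem then produces the unique small root $\lambda_1=-2(1-4\omega^2)\eps^2+O(\eps^3)$, while $\lambda_2=\lambda_3=0$ arise exactly from the $\lambda^2$ factor.

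The principal obstacle, resolved inside Lemma \ref{remainder}, is that the two persistent zero eigenvalues must be \emph{exact} rather than merely leading order. This reflects two exact symmetries of \eqref{sh1c}: translation invariance, placing $\partial_\xi \tilde u_{\eps,\omega,s}$ in $\ker\hat B_{\eps,\omega,s}$, and the conservation law $\partial_t\int u\,dx=0$ inherited from the outer $\partial_x^2$, placing the constant mode in that kernel independently. Lemma \ref{remainder} encodes both facts by identifying, via the uniqueness clause of the Implicit Function Theorem, the $\lambda=0$ specialization of the reduced spectral system with $\alpha_j$-derivatives of the existence bifurcation equation \eqref{reduced}, whose trivial third row forces the corresponding row of the spectral matrix to vanish identically. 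With Lemma \ref{remainder} granted, what remains is exactly the polynomial computation above.
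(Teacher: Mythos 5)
Your proposal is correct and follows essentially the same route as the paper: isolate the three critical eigenvalues by resolvent perturbation off $B_0(0)$, invoke Lemma \ref{remainder} to get divisibility of the reduced characteristic determinant by $\lambda^2$, and solve for the remaining root by the Implicit Function Theorem --- your factoring of $\lambda$ out of rows two and three before expanding is just a cleaner reorganization of the paper's step of expanding the cubic and then pulling out $\lambda^2$. One small imprecision in your motivational aside: the conservation law puts the constant mode in the kernel of the \emph{adjoint} (equivalently, constants annihilate the range of $\hat B_{\eps,\omega,s}=-k^2\partial_\xi^2[\cdots]$, which is why the third row of $\tilde Q(B-\lambda)W$ is exactly $(0,0,-\lambda)$), not in $\ker\hat B_{\eps,\omega,s}$ itself; the corresponding zero eigenfunction is $L^{-1}(1)$ rather than $U_3$, but this does not affect your argument.
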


\begin{proof}
	Setting the determinant of the matrix from \eqref{spmatrix} equal to $0$ and using Lemma \ref{remainder}, we obtain
	\be
	 	\begin{split}
	 		 &\big(c(\eps, \omega, s)-\lambda+ \mathcal O(\eps^2|\lambda|)\big)[(-\lambda+\mathcal O(\eps^2|\lambda|))(-\lambda+\mathcal O(\eps^2|\lambda|))-\mathcal O(\eps^6|\lambda|^2)]-\mathcal O(\eps^3|\lambda|)\\
	 		 &\times[\mathcal O(\eps^3|\lambda|)(-\lambda+\mathcal O(\eps^3|\lambda|))-\mathcal O(\eps^6|\lambda|^2)]+\mathcal O(\eps+\eps^3|\lambda|)[\mathcal O(\eps^6|\lambda|^2)-\mathcal O(\eps^3|\lambda|)(-\lambda+\mathcal O(\eps^2|\lambda|))]=0,
	 	\end{split}
	\ee
	where $c(\eps)=-2(1-4\omega^2)\eps^2+\mathcal O(\eps^3)$. Or,
	\be\label{pol}
	\begin{split}
	\lambda^3-\lambda^2\big(c(\eps)+\mathcal O(\eps^2|\lambda|)\big)+\lambda\mathcal O(\eps^4|\lambda|)+\mathcal O(\eps^6|\lambda|^2)=0.
	\end{split}
	\ee
	After factorization we arrive at 
	\be
	\begin{split}
		\lambda^2(\lambda-\tilde c(\eps)+\mathcal O(\eps^2|\lambda|))=0.
	\end{split}
	\ee
	where $\tilde c(\eps)=-2(1-4\omega^2)\eps^2+\mathcal O(\eps^3)$. 
Therefore,
		\be
		\begin{split}
			\lambda_{1}&=\tilde c(\eps)+\mathcal O(\eps^2|\lambda|),\\
			\lambda_{2}&=0,\\
			\lambda_{3}&=0.
		\end{split}
		\ee
\end{proof}

\subsection{Stability of the bifurcating periodic solutions: general case}

We now consider the eigenvalue problem of $B(\eps, \omega, s,\sigma)$:
\be\label{evp}
0=\Big[ B(\eps, \omega, s,\sigma)-\l I \Big] W.
\ee
In order to use the Lyapunov-Schmidt reduction, we decompose $W = \b_1 U_{1}+\b_{2} U_{2}+\b_{3} U_{3} + \mathcal V$ and we first solve
\be\label{slambda}
\begin{split}
	0
	& =(I-Q)\Big[ B(\eps, \omega, s,\sigma)-\l I \Big] (\b_1 U_{1}+\b_{2} U_{2}+\b_{3} U_{3} + \mathcal V).
\end{split}
\ee

Next, we go through steps described in the previous section. 
Next, it is clear that the relation between $\b$ and $\mathcal V$ is linear. Then, let $\mathcal V(\eps, \omega, s,\sigma,\lambda,\b)=\mathcal V_1(\eps, \omega, s,\sigma,\lambda)\b_1+\mathcal V_2(\eps, \omega, s,\sigma,\lambda)\b_2+\mathcal V_3(\eps, \omega, s,\sigma,\lambda)\b_3$. Now let us find asymptotic expansions of $\mathcal V_1$, $\mathcal V_2$ and $\mathcal V_3$ with respect to parameter $\eps$.\\
1.) First, we compute $\mathcal V_i(0,\omega,\sigma,\lambda)=\partial_{\b_i}\mathcal V|_{\eps=0}$. \\
a.) Differentiating \eqref{slambda} with respect $\b_i$ and plugging in $0$ for $\eps$, we obtain
$$
	0  =(I-Q)\Big[ B(0,\omega,\sigma)-\l I \Big] (U_{i} + \partial_{\b_i}\mathcal V|_{\eps=0}).
$$
Therefore,
\be\label{eqV}
\begin{split}
	&(I-Q)\Big[ B(0,\omega,\sigma)-\l I \Big]\partial_{\b_i}\mathcal V|_{\eps=0}
	=-(I-Q)\Big[ B(0,\omega,\sigma)-\l I \Big] U_{i}\\
	&=-(I-Q)\big[(\partial_\xi+i\sigma)^2(1+(\partial_\xi+i\sigma)^2)^2-\lambda\big]U_i=0.
\end{split}
\ee
We conclude that $\mathcal V_1(0,\omega,\sigma,\lambda)=0$. 

2.) Next, we would like to compute $\partial_{\eps}\partial_{\b_i}\mathcal V|_{\eps=0}$.\\
a.) We start with $\partial_{\eps}\partial_{\b_1}\mathcal V|_{\eps=0}$.
Differentiating \eqref{slambda} with respect $\b_1$ and $\eps$, and plugging in $0$ for $\eps$, we obtain
$
	0
	 =(I-Q)\big\{\partial_{\eps}B(0,\omega,\sigma) (U_{1} + \partial_{\b_1}\mathcal V|_{\eps=0})+[ B(0,\omega,\sigma)-\l I]\partial_{\eps}\partial_{\b_1}\mathcal V|_{\eps=0}\big\}.
$

Therefore,
$
	(I-Q)[ B(0,\omega,\sigma)-\l I]\partial_{\eps}\partial_{\b_1}\mathcal V|_{\eps=0}
	 =-(I-Q)\partial_{\eps}B(0,\omega,\sigma) (U_{1} + \partial_{\b_1}\mathcal V|_{\eps=0}),
$
or, using \eqref{linearization about tilde u},
\be
\begin{split}\label{Vsel}
	&(I-Q)[ B(0,\omega,\sigma)-\l I]\partial_{\eps}\partial_{\b_1}\mathcal V|_{\eps=0}
	=(I-Q)(\partial_\xi+i\sigma)^2\big[ -2\omega(1+(\partial_\xi+i\sigma)^2)^2\\
	&-4\omega(\partial_\xi+i\sigma)^2(1+(\partial_\xi+i\sigma)^2)-12s\sqrt{\frac{1-4\omega^2}{27-2s^2}}\cos\xi] U_{1}=-12s\sqrt{\frac{1-4\omega^2}{27-2s^2}}(I-Q)(\partial_\xi+i\sigma)^2\cos^2\xi\\&=-12s\sqrt{\frac{1-4\omega^2}{27-2s^2}}[(-2-\frac{\sigma^2}{2})\cos2\xi-2i\sigma\sin2\xi].
\end{split}
\ee
Hence, $\partial_{\eps}\partial_{\b_1}\mathcal V|_{\eps=0}$ is of the form
\be
 \begin{split}
&\partial_{\eps}\partial_{\b_1}\mathcal V|_{\eps=0}= (*)\sin 2\xi+(*)\cos 2\xi.
\end{split}
 \ee

b.) In a similar fashion, one can show that $\partial_{\eps}\partial_{\b_2}\mathcal V|_{\eps=0}=(*)\sin 2\xi+(*)\cos 2\xi$ and $\partial_{\eps}\partial_{\b_3}\mathcal V|_{\eps=0}=0$.
Overall, we have
\be
 \begin{split}\label{Vse}
\mathcal V(\eps, \omega, s,\sigma,\lambda,\b)&= \big(((*)\sin 2\xi+(*)\cos 2\xi)\eps+\mathcal O(\eps^2)\big)\b_1+\big(((*)\sin 2\xi+(*)\cos 2\xi)\eps+\mathcal O(\eps^2)\big)\b_2\\
&+\mathcal O(\eps^2)\b_3.
\end{split}
 \ee

In order to obtain the reduced system for the spectral problem, we plug $W = \b_1 U_{1}+\b_{2} U_{2} + \mathcal V$, where $\mathcal V$ is given by \eqref{Vse},
 into the equation 
\be\label{redeq}
\begin{split}
	0
	& =\tilde Q\Big[ B(\eps, \omega, s,\sigma)-\l I \Big] W.
\end{split}
\ee

Using \eqref{linearization about tilde u} and \eqref{Vse}, we arrive at
\be\label{spmatrixs}
\begin{split}
	0
	& =m\bp \b_1 \\ \b_2\\\b_3 \ep:=\tilde Q\Big[ B(\eps, \omega, s,\sigma)-\l I \Big](\b_1 U_{1}+\b_{2} U_{2}+\b_{3} U_{3})+\tilde Q\Big[ B(\eps, \omega, s,\sigma)-\l I \Big]\mathcal V\\
	&=\bp -4\sigma^2+c(\eps)-\lambda & 8i\omega\sigma\eps &-12s\sqrt{\frac{1-4\omega^2}{27-2s^2}}\eps\\ -8i\omega\sigma\eps & -4\sigma^2-\lambda&0\\
	-6s\sqrt{\frac{1-4\omega^2}{27-2s^2}}\sigma^2\eps&0&-\sigma^2-\lambda \ep\bp \b_1 \\ \b_2\\\b_3 \ep+O\bp \b_1 \\ \b_2\\\b_3 \ep,
	\end{split}
	\ee
	where 
	\be
	\begin{split}
	c(\eps)&=-2(1-4\omega^2)\eps^2+\mathcal O(\eps^3),\\
	O_{11}&=\mathcal O(\sigma^4+\sigma^2\eps+(|\lambda|(1+\sigma)+\sigma)\eps^2),\,\,O_{12}=\mathcal O(|\sigma|^3+|\sigma|^3\eps+|\sigma|\eps^2+|\lambda|\eps^3),\\
	O_{13}&=\mathcal O(\eps^2(1+|\sigma|)+|\lambda|\eps^3),\,\,O_{21}=\mathcal O(|\sigma|^3+|\sigma|^3\eps+|\sigma|\eps^2+|\lambda|\eps^3),\\
	O_{22}&=\mathcal O(\sigma^4+\sigma^2\eps+(|\lambda|(1+|\sigma|)+\sigma)\eps^2),\,\,O_{23}=\mathcal O(\eps^2|\sigma|+|\lambda|\eps^3),\\
	O_{31}&=\mathcal O((1+|\lambda|)|\sigma|\epsilon^2+|\lambda|\eps^3),\,\,O_{32}=\mathcal O((1+|\lambda|)|\sigma|\epsilon^2+|\lambda|\eps^3),\\
	O_{33}&=\mathcal O(\sigma^4+\sigma^2\eps+|\sigma|\eps^2+|\lambda|\eps^3).
		\end{split}
		\ee

One can improve the error estimates in \eqref{spmatrixs} using symmetric properties of the eigenvalue problem \eqref{evp}. In particular, we gain additional information on elements of matrix $m$.
\begin{lemma}
The diagonal elements of matrix $m$ and elements $m_{13}$, $m_{31}$  in \eqref{spmatrixs} are even in $\sigma$ and all other elements are odd in $\sigma$. Moreover, if $\lambda$ is real then $m_{ii}$, $m_{13}$ and $m_{31}$ are real-valued while all other elements are purely imaginary. 
\end{lemma}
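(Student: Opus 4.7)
The plan is to exploit two equivariances of the Bloch family that swap $\sigma\leftrightarrow-\sigma$. Let $(Rf)(\xi):=f(-\xi)$ denote spatial reflection and $C$ complex conjugation. Because $\tilde u_{\eps,\omega,s}$ is even in $\xi$ by Theorem~\ref{existence}, $df(\tilde u_{\eps,\omega,s})$ commutes with $R$; combined with $R\partial_\xi=-\partial_\xi R$ this yields
\[
R\,B(\eps,\omega,s,\sigma)=B(\eps,\omega,s,-\sigma)\,R.
\]
Since the coefficients of the problem are real apart from the $i\sigma$ terms, conjugating $i\sigma$ to $-i\sigma$ gives $C\,B(\eps,\omega,s,\sigma)=B(\eps,\omega,s,-\sigma)\,C$. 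On the kernel basis, $RU_1=U_1$, $RU_2=-U_2$, $RU_3=U_3$; writing $D=\mathrm{diag}(1,-1,1)$ for the matrix of $R|_{\ker L_{per}}$, a direct check shows $RQ=QR$, $CQ=QC$, $\tilde Q R=D\tilde Q$, and $\tilde Q C=\overline{(\cdot)}\,\tilde Q$.

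Next I would push these equivariances into the Lyapunov--Schmidt equation
\[
(I-Q)[B(\eps,\omega,s,\sigma)-\lambda I]\,\mathcal V \;=\; -(I-Q)[B(\eps,\omega,s,\sigma)-\lambda I]\big(\beta_1 U_1+\beta_2 U_2+\beta_3 U_3\big).
\]
Commuting $R$ through each factor transforms this identity into the same equation with $\sigma\mapsto -\sigma$ and $\beta\mapsto D\beta$, so uniqueness of the implicit-function solution forces
\[
R\,\mathcal V(\eps,\omega,s,\sigma,\lambda,\beta)=\mathcal V(\eps,\omega,s,-\sigma,\lambda,D\beta).
\]
The same manipulation with $C$ --- which requires $\lambda\in\mathbb R$ so that $C(\lambda I)=\lambda I\,C$ --- produces $C\,\mathcal V(\eps,\omega,s,\sigma,\lambda,\beta)=\mathcal V(\eps,\omega,s,-\sigma,\lambda,\overline\beta)$.

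Substituting the full ansatz $W=\beta_1 U_1+\beta_2 U_2+\beta_3 U_3+\mathcal V$ into the reduced equation $m(\sigma,\lambda)\beta=\tilde Q[B(\eps,\omega,s,\sigma)-\lambda I]W$ and applying $R$ yields $D\,m(\sigma,\lambda)\beta=m(-\sigma,\lambda)\,D\beta$ for every $\beta$, whence $m(-\sigma,\lambda)=D\,m(\sigma,\lambda)\,D$; entrywise, $m_{ij}(-\sigma,\lambda)=D_{ii}D_{jj}\,m_{ij}(\sigma,\lambda)$. The five entries $m_{11},m_{22},m_{33},m_{13},m_{31}$ (with $D_{ii}D_{jj}=+1$) are therefore even in $\sigma$, while the remaining four off-diagonal entries $m_{12},m_{21},m_{23},m_{32}$ are odd. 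The parallel use of $C$ gives, for $\lambda\in\mathbb R$, $\overline{m(\sigma,\lambda)}=m(-\sigma,\lambda)$; combining the two identities produces $\overline{m_{ij}(\sigma,\lambda)}=D_{ii}D_{jj}\,m_{ij}(\sigma,\lambda)$, so the former five entries are real and the latter four are purely imaginary.

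The only delicate point is the bookkeeping for the antilinear operator $C$: the operator identity $C\,(A-\lambda I)=(B-\overline\lambda I)\,C$ is what forces the restriction $\lambda\in\mathbb R$ in the reality statement, and one must also verify that $C$ genuinely commutes with the projection $Q$ and transforms correctly under $\tilde Q$ (which is immediate from the fact that the basis $U_1,U_2,U_3$ is real-valued). Beyond this subtlety, the argument is a routine instance of equivariant Lyapunov--Schmidt reduction.
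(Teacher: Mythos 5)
Your proof is correct and follows essentially the same route as the paper: the paper uses exactly the two symmetries $R_1W(\xi)=W(-\xi)$ and $R_2W(\xi)=\overline{W}(\xi)$, cites the equivariant Lyapunov--Schmidt proposition of Golubitsky--Schaeffer to conclude that the reduced matrix inherits them, and reads off the identities $m(-\sigma,\lambda)=D\,m(\sigma,\lambda)\,D$ and $\overline{m(\sigma,\lambda)}=m(-\sigma,\lambda)$ (for $\lambda\in\RR$) that you derive. The only difference is that you spell out the intermediate steps (equivariance of $\mathcal V$ via uniqueness in the Implicit Function Theorem, the intertwining relations for $Q$ and $\tilde Q$) that the paper delegates to the cited reference.
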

\begin{proof}
First, we note that \eqref{evp} possesses two symmetries \cite{M2}
\be\label{symms}
\begin{split}
&[ B(\eps, \omega, s,\sigma)-\l I \Big]R_1=R_1[ B(\eps, \omega, s,-\sigma)-\l I \Big],\,\lambda\in\mathbb{C}\\
&[ B(\eps, \omega, s,\sigma)-\l I \Big]R_2=R_2[ B(\eps, \omega, s,-\sigma)-\l I \Big],\,\lambda\in\mathbb{R}.
\end{split}
\ee
where $R_1W(\xi)=W(-\xi)$, and $R_2W(\xi)=\overline W(\xi)$. \\
Following the steps of proof of Proposition 3.3 \cite[Chapter VII]{GS}, one can show that the reduced system \eqref{redeq} commutes with symmetries defined in \eqref{symms}, i.e.
\begin{equation}
m(\eps, \omega, s,\sigma,\lambda)\bp \b_1 \\ -\b_2\\\b_3 \ep=\bp m_{11}(\eps, \omega, s,-\sigma,\lambda) &m_{12}(\eps, \omega, s,-\sigma,\lambda)&m_{13}(\eps, \omega, s,-\sigma,\lambda)\\ -m_{21}(\eps, \omega, s,-\sigma,\lambda)&-m_{22}(\eps, \omega, s,-\sigma,\lambda)&-m_{23}(\eps, \omega, s,-\sigma,\lambda)\\  m_{31}(\eps, \omega, s,-\sigma,\lambda) &m_{32}(\eps, \omega, s,-\sigma,\lambda)&m_{33}(\eps, \omega, s,-\sigma,\lambda) \ep\bp \b_1 \\ \b_2 \\\b_3\ep,
\end{equation}
where $\lambda\in\mathbb{C}$.
And
\begin{equation}
m(\eps, \omega, s,\sigma,\lambda)\bp \b_1 \\ \b_2\\\b_3 \ep=\overline m(\eps, \omega, s,-\sigma,\lambda)\bp \b_1 \\ \b_2\\\b_3 \ep,\,\lambda\in\mathbb{R}.
\end{equation}
\end{proof}
\begin{corollary}\label{errcorr}
The error matrix in \eqref{spmatrixs} has the form
\begin{equation*}
\bp \mathcal O(\sigma^4+\sigma^2\eps+|\lambda|(1+\sigma^2)\eps^2) & \mathcal O(\sigma^3+|\sigma|^3\eps+|\sigma|\eps^2+|\lambda\sigma|\eps^3)& \mathcal O(\eps^2(1+\sigma^2)+|\lambda|\eps^3)\\  \mathcal O(|\sigma|^3+|\sigma|^3\eps+|\sigma|\eps^2+|\lambda\sigma|\eps^3) & \mathcal O(\sigma^4+\sigma^2\eps+|\lambda|(1+\sigma^2)\eps^2)&\mathcal O(\eps^2|\sigma|+|\lambda\sigma|\eps^3)\\ \mathcal O((1+|\lambda|)\sigma^2\epsilon^2+|\lambda|\eps^3)&\mathcal O((1+|\lambda|)|\sigma|\epsilon^2+|\lambda\sigma|\eps^3)&\mathcal O(\sigma^4+\sigma^2\eps+|\lambda|\eps^3) \ep.
\end{equation*}
\end{corollary}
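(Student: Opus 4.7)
The strategy is to combine the crude error estimates already established in \eqref{spmatrixs} with the parity statement of the preceding lemma. The key observation is that every entry of the matrix $m(\eps,\omega,s,\sigma,\lambda)$ is \emph{analytic} in $\sigma$ near $\sigma=0$: indeed $B(\eps,\omega,s,\sigma)$ depends polynomially on $\sigma$ (through $(\partial_\xi+i\sigma)^2$), and the Lyapunov--Schmidt corrector $\mathcal V$ was obtained via the analytic implicit function theorem, so $\mathcal V(\eps,\omega,s,\sigma,\lambda,\b)$ inherits analyticity in $\sigma$, and so does the reduced matrix $m$. Hence each entry $m_{ij}$ admits a convergent Taylor expansion in $\sigma$ around $\sigma=0$.

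Once analyticity in $\sigma$ is in hand, the parity lemma forces a drastic simplification. If $m_{ij}(\sigma)$ is even in $\sigma$ then all odd-order Taylor coefficients vanish; if it is odd in $\sigma$ then all even-order coefficients vanish. Therefore, whenever the crude bound in \eqref{spmatrixs} contains a term of the wrong parity---say a linear $|\sigma|$ piece in an even entry, or a constant-in-$\sigma$ piece in an odd entry---that term may be replaced by the next-order admissible power of $\sigma$, since the offending coefficient must be zero.

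Applying this term by term, for the diagonal entries $m_{11},m_{22},m_{33}$ and the $(1,3)$, $(3,1)$ entries (which are even) the crude odd contributions $\sigma\eps^2$, $|\sigma|\eps^2$, $|\sigma|\eps^2$ get promoted respectively to $\sigma^2\eps$-consistent or $\sigma^2\eps^2$-type bounds, and the $|\lambda|(1+|\sigma|)\eps^2$ term becomes $|\lambda|(1+\sigma^2)\eps^2$; on the $(3,1)$ entry the $|\sigma|\eps^2$ similarly becomes $\sigma^2\eps^2$. For the odd entries $m_{12},m_{21},m_{23},m_{32}$, the crude constant-in-$\sigma$ piece $|\lambda|\eps^3$ must vanish and is replaced by $|\lambda\sigma|\eps^3$. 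Collecting these upgraded bounds yields exactly the improved error matrix stated in the corollary.

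The only point requiring a bit of care is to verify analyticity (not just smoothness) of each entry, which is needed to make the parity-kills-term argument rigorous; but since both the parametrization of $B(\eps,\omega,s,\sigma)$ in $\sigma$ and the Lyapunov--Schmidt solver are analytic, this is immediate, and no obstacle remains.
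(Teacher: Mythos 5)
Your proposal is correct and is essentially the paper's intended argument: the corollary is stated without separate proof precisely because it follows immediately from the parity lemma, by noting that each entry of $m$ is analytic in $\sigma$ (polynomial dependence of $B(\eps,\omega,s,\sigma)$ on $\sigma$ plus the analytic implicit function theorem for $\mathcal V$), so that wrong-parity Taylor coefficients in the crude remainders of \eqref{spmatrixs} vanish and each offending term is promoted to the next admissible power of $\sigma$. Your explicit remark that analyticity (not mere evenness/oddness, since e.g.\ $|\sigma|$ is itself even) is what makes the term-killing rigorous is a worthwhile clarification, but the route is the same as the paper's.
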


\begin{proof}[Proof of Theorem \ref{stabthm}]
	Let us take the determinant of $m$ from formula \eqref{spmatrixs}. 
	\be
	\begin{split}
	\det	m(\eps, \omega, s,\sigma,\lambda)&:=-\lambda^3+\lambda^2\big(c(\eps)-9\sigma^2+\mathcal O(\sigma^4+\sigma^2\eps+|\lambda|(1+\sigma^2)\eps^2)\big)\\
		&-\lambda\big(24\sigma^4-5c(\eps)\sigma^2-72s^2A\sigma^2\eps^2-64\omega^2\sigma^2\eps^2\\
		&+\mathcal O(\sigma^6+\sigma^4\eps+(\sigma^2+|\lambda|)\sigma^2\eps^2+(1+|\lambda|)\sigma^2\eps^3+|\lambda|\eps^4)\big)\\
		&-16\sigma^6+4c(\eps)\sigma^4+64\omega^2\sigma^4\eps^2+288s^2A\sigma^4\eps^2\\
		&+\mathcal O(\sigma^8+\sigma^6\eps+(\sigma^2+|\lambda|)\sigma^4\eps^2+(1+|\lambda|)\sigma^4\eps^3+|\lambda|^2\eps^6),
	\end{split}
	\ee
	where $A=\frac{1-4\omega^2}{27-2s^2}$ and $c(\eps)=-2(1-4\omega^2)\eps^2+\mathcal O(\eps^3)$. Notice that the error terms are real.
	  According to the Weierstrass Preparation Theorem, there exists an analytic function $q(\eps,\sigma,\lambda)$ in a neighborhood of $(0,0,0)$ such that $q(0,0,0)=-1$ and
	 	\be\label{wpt}
	 	\begin{split}
	 		q(\eps,\sigma,\lambda)\det m(\eps, \omega, s,\sigma,\lambda)=\lambda^3+a_2\lambda^2+a_1\lambda+a_0.
	 	\end{split}
	 	\ee
	 	Notice that
	 	\be
	 	\begin{split}
	 		a_0(\eps,\sigma)&=q(\eps,\sigma,0)\det m(\eps, \omega, s,\sigma,0)=-\det m(\eps, \omega, s,\sigma,0)+\mathcal O\big(\sigma^4(\eps+|\sigma|)^3\big)\\
	 		&=16\sigma^6-4c(\eps)\sigma^4-64\omega^2\sigma^4\eps^2-288s^2A\sigma^4\eps^2+\mathcal O\big(\sigma^4(\eps+|\sigma|)^3\big),\\
	 		a_1(\eps,\sigma)&=q'_{\lambda}(\eps,\sigma,0)\det m(\eps, \omega, s,\sigma,0)
	 		+q(\eps,\sigma,0)(\det m)'_{\lambda}(\eps, \omega, s,\sigma,0)\\
	 		&=-(\det m)'_{\lambda}(\eps, \omega, s,\sigma,0)+\mathcal O(\sigma^2(\eps+|\sigma|)^3)=24\sigma^4-5c(\eps)\sigma^2-72s^2A\sigma^2\eps^2-64\omega^2\sigma^2\eps^2\\
	 		&+\mathcal O(\sigma^2(\eps+|\sigma|)^3),\\
	 		a_2(\eps,\sigma)&=-\frac{1}{2}(\det m)''_{\lambda}(\eps, \omega, s,\sigma,0)+\mathcal O((\eps+\sigma)^3)=-c(\eps)+9\sigma^2+\mathcal O((\eps+|\sigma|)^3).
	 	\end{split}
	 	\ee
	 
	 	Therefore, the eigenvalue problems boils down to the third order polynomial
	 	\be\label{cubic}
	 	\begin{split}
	 		&\lambda^3+\big(-c(\eps)+9\sigma^2+\mathcal O((\eps+|\sigma|)^3)\big)\lambda^2-\sigma^2\big(-24\sigma^2+5c(\eps)+(72s^2A+64\omega^2)\eps^2+\mathcal O((\eps+|\sigma|)^3)\big)\lambda\\
	 		&-\sigma^4(-16\sigma^2+4c(\eps)+(288s^2A+64\omega^2)\eps^2+\mathcal O\big((\eps+|\sigma|)^3\big))=0.
	 	\end{split}
	 	\ee
	 	Since the error terms in the $\det m$ are real, they are in \eqref{cubic} as well. In particular, if $\omega=\pm\frac{1}{2}$ the product of the eigenvalues $\sigma^4(-16\sigma^2+4c(\eps)+(288s^2A+64\omega^2)\eps^2+\mathcal O\big((\eps+|\sigma|)^3\big))>0$ for small enough $\sigma$ and positive $\eps$. Therefore, we conclude that $\max{\Re\lambda_i}>0$ if $\omega=\pm\frac{1}{2}$.\\
	 	Next, our goal is to factor out a cubic polynomial. For this purpose we will use the well-known decomposition procedure, that is, we make a substitution $\lambda=\mu-\frac{1}{3}a_2$. Then the general cubic equation becomes 
	 	\be
	 	\begin{split}
	 		\mu^3+3Q\mu-2R=0,
	 	\end{split}
	 	\ee
	 	where 
	 	\be
	 	\begin{split}
	 		Q&=\frac{3a_1-a_2^2}{9},\\
	 		R&=\frac{9a_2a_1-27a_0-2a_2^3}{54}.	
	 	\end{split}
	 	\ee
	 	Then one can factor out a cubic polynomial as
	 	\be
	 	\begin{split}
	 		\mu^3+3Q\mu-2R=(\mu-B)(\mu^2+B\mu+D+Q),
	 	\end{split}
	 	\ee
	 	where
	 	\be
	 	\begin{split}
	 		B&=\big[R+\sqrt{Q^3+R^2}\big]^{1/3}+\big[R-\sqrt{Q^3+R^2}\big]^{1/3},\\
	 		D&=\big[R+\sqrt{Q^3+R^2}\big]^{2/3}+\big[R-\sqrt{Q^3+R^2}\big]^{2/3}.	
	 	\end{split}
	 	\ee
	 	Therefore, the roots are
	 	\be\label{roots} 
	 		 		 	\begin{split}
	 		 		 	\lambda_1&=-\frac{1}{3}a_2+B,\\
	 		 		 		\lambda_{2,3}&=\frac{-\frac{2}{3}a_2-B\pm\sqrt{B^2-4(D+Q)}}{2}.
	 		 		 	\end{split}
	 		 		 	\ee
	 	
	  Next, we compute $Q$, $R$, $B$, $D$. First, we rewrite $a_i$ in the following fasion:
	 		 		 		 		 	\be
	 		 		 		 		 		 	\begin{split}
	 		 		 		 		 		 		a_0&=P_{04}\sigma^4+\mathcal O\big(\sigma^6(\eps+1)\big),\\
	 		 		 		 		 		 		a_1&=P_{12}\sigma^2+P_{14}\sigma^4+\mathcal O\big(|\sigma|^5(\eps+1)\big),\\
	 		 		 		 		 		 		a_2&=P_{20}+P_{22}\sigma^2+P_{24}\sigma^4+\mathcal O\big(|\sigma|^5(\eps+1)\big),
	 		 		 		 		 		 	\end{split}
	 		 		 		 		 		 	\ee
	 		 		 		 		 	where
	 		 		 		 		 		\be
	 		 		 		 		 		 	\begin{split}
	 		 		 		 		 		 P_{04}&=-4c(\eps)-64\omega^2\eps^2-288s^2A\eps^2+\mathcal O(\eps^3)+\sigma\mathcal O(\eps^2),\\
	 		 		 		 		 		 			P_{12}&=-5c(\eps)-72s^2A\eps^2-64\omega^2\eps^2+\mathcal O(\eps^3)+\sigma\mathcal O(\eps^2),\,\,P_{14}=24+\mathcal O(\eps),\\
	 		 		 		 		 		 		 	P_{20}&=-c(\eps)+\mathcal O(\eps^3)+\sigma\mathcal O(\eps^2),\,\,P_{22}=9+\mathcal O(\eps).
	 		 		 		 		 		 		 	\end{split}
	 		 		 		 		 		 		 	\ee
	 		 		 		 		 	\be
	 		 		 		 		 	\begin{split}
	 		 		 		 		 		Q&=\frac{3a_1-a_2^2}{9}=\frac{3P_{12}\sigma^2+3P_{14}\sigma^4-(P^2_{20}+2P_{20}P_{22}\sigma^2+2P_{20}P_{24}\sigma^4+P^2_{22}\sigma^4)+\mathcal O\big(|\sigma|^5(\eps+\sigma)\big)}{9}\\
	 		 		 		 		 		&=\frac{-P^2_{20}+(3P_{12}-2P_{20}P_{22})\sigma^2+(3P_{14}-2P_{20}P_{24}-P^2_{22})\sigma^4+\mathcal O\big(|\sigma|^5(\eps+1)\big)}{9},\\
	 		 		 		 		 		R&=\frac{9a_2a_1-27a_0-2a_2^3}{54}=\frac{9(P_{20}P_{12}\sigma^2+(P_{20}P_{14}+P_{22}P_{12})\sigma^4)-27P_{04}\sigma^4
	 		 		 		 		 			}{54}\\
	 		 		 		 		 			&\frac{-2(P^3_{20}+3P^2_{20}P_{22}\sigma^2+3(P_{20}P^2_{22}+P^2_{20}P_{24})\sigma^4)+\mathcal O(|\sigma|^5(\eps^3+\eps|\sigma|+|\sigma|))}{}\\
	 		 		 		 		 			&=\frac{-2P^3_{20}+(9P_{20}P_{12}-6P^2_{20}P_{22})\sigma^2+(9P_{20}P_{14}+9P_{22}P_{12}-27P_{04}-6P_{20}P^2_{22}-6P^2_{20}P_{24})\sigma^4
	 		 		 		 		 			}{54}\\
	 		 		 		 		 			&+\mathcal O(|\sigma|^5(\eps^3+\eps|\sigma|+|\sigma|)).	
	 		 		 		 		 	\end{split}
	 		 		 		 		 	\ee
	 		 		 		 		 	Next, we compute $Q^3+R^2$.
	 		 		 		 		 	\be
	 		 		 		 		 	\begin{split}
	 		 		 		 		 		Q^3+R^2&=\frac{-P^6_{20}+3P^4_{20}(3P_{12}-2P_{20}P_{22})\sigma^2-3P^2_{20}(3P_{12}-2P_{20}P_{22})^2\sigma^4}{3^6}\\
	 		 		 		 		 		&\frac{+3P^4_{20}(3P_{14}-2P_{20}P_{24}-P^2_{22})\sigma^4}{}+\frac{4P^6_{20}-4P^3_{20}(9P_{20}P_{12}-6P^2_{20}P_{22})\sigma^2}{4\cdot3^6}\\
	 		 		 		 		 			&\frac{-4P^3_{20}(9P_{20}P_{14}+9P_{22}P_{12}-27P_{04}-6P_{20}P^2_{22}-6P^2_{20}P_{24})\sigma^4
	 		 		 		 		 		}{}\\
	 		 		 		 		 		&\frac{+(9P_{20}P_{12}-6P^2_{20}P_{22})^2\sigma^4}{}+\mathcal O(|\sigma|^5(\eps+|\sigma|))=\big[-\frac{P^2_{20}P^2_{12}}{108}+\frac{P^3_{20}P_{04}}{27}\big]\sigma^4\\
	 		 		 		 		 		&+\mathcal O(\sum_{j=5}^{12}|\sigma|^j\eps^{12-j}).	
	 		 		 		 		 	\end{split}
	 		 		 		 		 	\ee
	 		 		 		 		 	
	 		 		 		 		 		Now, let $\sigma=\eps\hat\sigma$. Then,  the roots are
	 		 		 		 		 			\be \label{larges}
	 		 		 		 		 			\begin{split}
	 		 		 		 		 				\lambda_1&=(-\frac{1}{3}\tilde a_2+\tilde B)\eps^2,\\
	 		 		 		 		 				\lambda_{2,3}&=\frac{-\frac{2}{3}\tilde a_2-\tilde B\pm\sqrt{-3\tilde B^2-12\tilde Q}}{2}\eps^2.\\
	 		 		 		 		 			\end{split}
	 		 		 		 		 			\ee
	 		 		 		 		 			
	 		 		 		 		 		where
	 		 		 		 		 		\be\label{troots}
	 		 		 		 		 		\begin{split}
	 		 		 		 		 			-\frac{1}{3}\tilde a_2&=-\frac{2}{3}(1-4\omega^2)-3\hat{\sigma}^2+\mathcal O(\eps+|\sigma|)\sum_{j=0}^{1} C_{j}(\hat{\sigma}^2)^{j},\\
	 		 		 		 		 			\tilde B&=\big[-\frac{8}{27}(1-4\omega^2)^3+\hat\sigma^6+\mathcal O(\eps+|\sigma|)\sum_{j=0}^3 \tilde C_{j}(\hat{\sigma}^2)^{j}+\sum_{j=1}^2 \tilde{\tilde C}_{j}(\hat{\sigma}^2)^{j}
	 		 		 		 		 			+\sqrt{\tilde Q^3+\tilde R^2}\big]^{1/3}\\
	 		 		 		 		 			&+\big[-\frac{8}{27}(1-4\omega^2)^3+\hat\sigma^6+\mathcal O(\eps+|\sigma|)\sum_{j=0}^3 \tilde C_{j}(\hat{\sigma}^2)^{j}+\sum_{j=1}^2 \tilde{\tilde C}_{j}(\hat{\sigma}^2)^{j}-\sqrt{\tilde Q^3+\tilde R^2}\big]^{1/3},\\	 
	 		 		 		 		 			\tilde Q&=-\frac{4}{9}(1-4\omega^2)^2-\hat\sigma^4+\mathcal O(\eps+|\sigma|)\sum_{j=0}^2 {\hat C}_{j}(\hat{\sigma}^2)^{j}+\hat{\hat C}\hat\sigma^2	 	,\\
	 		 		 		 		 			\tilde R&=\frac{1}{\eps^6}R.	 		 
	 		 		 		 		 		\end{split}
	 		 		 		 		 		\ee
	 		 		 		 		 		Next, we fix $\omega$ such that $\omega^2\ne\frac{1}{4}$. Then we consider three different cases: 1) $|\hat \sigma|<<1$, 2) $1/C\leq|\hat \sigma|\leq C$, 3) $|\hat \sigma|>>1$.\\
	 		 		 		 		 		1) $|\hat \sigma|<<1$. We expand $\lambda_{1,2,3}$ w.r.t. $\hat \sigma$.
	 		 		 		 		 	Therefore, the expansions of $B$ and $D$ with respect to $\hat\sigma$ are of the form
	 		 		 		 		 	\be\label{R+-}
	 		 		 		 		 		 \begin{split}
	 		 		 		 		 		B&=[R+ \sqrt{Q^3+R^2}]^{1/3}+[R- \sqrt{Q^3+R^2}]^{1/3}=-\frac{ 2P_{20}}{3}+\frac{2}{3}R_1\frac{9}{P^2_{20}}\eps^2\hat\sigma^2+\big[\frac{2}{3}R_2\frac{9}{P^2_{20}}\\
	 		 		 		 		 		&+\frac{2}{9}(R_1^2+S_1^2)\frac{3^5}{P^5_{20}}\eps^6+12R_1\frac{(P'_{20})^2}{P^4_{20}}\big]\eps^4\hat\sigma^4+\mathcal O(\eps^2|\hat\sigma|^5(\eps+|\hat\sigma|)),\\
	 		 		 		 	 			D&=[R+ \sqrt{Q^3+R^2}]^{2/3}+[R- \sqrt{Q^3+R^2}]^{2/3}=\frac{ 2P^2_{20}}{9}-\frac{4}{3}R_1\frac{3}{P_{20}}\eps^2\hat\sigma^2+\big[-\frac{4}{3}R_2\frac{3}{P_{20}}\\
	 		 		 		 	 			&-\frac{2}{9}(R_1^2+S_1^2)\frac{3^4}{P^4_{20}}\eps^6-4R_1\frac{(P'_{20})^2}{P^3_{20}}\big]\eps^4\hat\sigma^4
	 		 		 			 		 		+\mathcal O(\eps^4|\hat\sigma|^5(\eps+|\hat\sigma|)),
	 		 		 		 		 		 	\end{split}
	 		 		 		 		 		 	\ee
	 		 		 		 		 		 	where
	 		 		 		 		 		 		 \be
	 		 		 		 		 		 		 \begin{split}
	 		 		 		 		 		 		 R_1&=\frac{9P_{20}P_{12}-6P^2_{20}P_{22}
	 		 		 		 		 		 		 		 		 			 			}{54},\\
	 		 		 		 	R_2&=\frac{9P_{20}P_{14}+9P_{22}P_{12}-27P_{04}-6P_{20}P^2_{22}-6P^2_{20}P_{24}
	 		 		 		 		 		 		 		 		 			 			}{54},\\
	 		 		 		S_1^2&=-\frac{P^2_{20}P^2_{12}}{108}+\frac{P^3_{20}P_{04}}{27},
	 		 		 		 		 		 		 		 		 	\end{split}
	 		 		 		 		 		 		 		 		 	\ee
	 		 		 		 		 		 		 	and $P'_{20}$ denotes the derivative of $P_{20}$ with respect to $\sigma$. Also, note that $P_{12}$ and $P_{20}$ are first degree polynomials with respect to $\sigma$.	 		 	
	 		 		 		 		 		 	Hence, using \eqref{roots} and \eqref{R+-}, we arrive at
	 		 		 		 		 		 		 	\be
	 		 		 		 		 		 		 	\begin{split}
	 		 		 		 		 		 		 	\lambda_1&=-P_{20}+[(-P_{22}+\frac{P_{12}}{P_{20}})]\eps^2\hat\sigma^2+\mathcal O(\eps^2|\hat\sigma|^3)\\
	 		 		 		 		 		 		 	&=-2(1-4\omega^2)\eps^2+\mathcal O(\eps^3)+\mathcal O(\eps^2)\sigma+(-\frac{36s^2}{27-2s^2}-\frac{4(1+4\omega^2)}{1-4\omega^2}+\mathcal O(\eps))\sigma^2+\mathcal O(\eps^2|\hat\sigma|^3).
	 		 		 		 		 		 		 	\end{split}
	 		 		 		 		 		 		 	\ee
	 		 		 		 		 		
	 		 		 		 		 	The other two roots are of the form
	 		 		 		 			\be \label{quad1}
	 		 		 		 		 		 	\begin{split}
	 		 		 		 		 		 		\lambda_{2,3}&=\frac{(-5+\frac{36s^2}{27-2s^2}+\frac{32\omega^2}{1-4\omega^2})\sigma^2+\mathcal O(\eps^2|\hat\sigma|^3)\pm\sqrt{(5-\frac{36s^2}{27-2s^2}-\frac{32\omega^2}{1-4\omega^2})^2\sigma^4}}{2}\\
	 		 		 		 		 		 				&\frac{\overline{-4(4-\frac{144s^2}{27-2s^2}-\frac{32\omega^2}{1-4\omega^2})\sigma^4+\mathcal O(\eps^4|\hat\sigma|^5(\eps+|\hat\sigma|))}}{}
	 		 		 		 		 		 	\end{split}
	 		 		 		 		 		 	\ee
	 		 		 		 		 		 	If $4-\frac{144s^2}{27-2s^2}-\frac{32\omega^2}{1-4\omega^2}<0$, then $\max{\Re\lambda_i}>0$. And if $4-\frac{144s^2}{27-2s^2}-\frac{32\omega^2}{1-4\omega^2}>0$, then $\max{\Re\lambda_i}\leq0$.\\
	 		 		 		 		 	 Finally,
	 		 		 		 		 		 		 	\be\label{eBM}
	 		 		 		 			\begin{split}
	 		 		 		 		 		&\lambda_{2}(\eps,\omega,s,\sigma)=(\lambda_-+\mathcal O(\eps))\eps^2\hat\sigma^2+\mathcal O(\eps^2|\hat\sigma|^3),\\
	 		 		 		 		 	&\lambda_{3}(\eps,\omega,s,\sigma)=(\lambda_++\mathcal O(\eps))\eps^2\hat\sigma^2+\mathcal O(\eps^2|\hat\sigma|^3),
	 		 		 		 		 		 		 		\end{split}
	 		 		 		 		 		 		 			\ee
	 		 		 		 		 		 		 			where
	 		 		 		 		 		 		 			\be 
	 		 		 		 		 		 		 			\begin{split}
	 		 		 		 		 		 		 				\lambda_{\pm}&=\frac{(-5+\frac{36s^2}{27-2s^2}+\frac{32\omega^2}{1-4\omega^2})\pm\sqrt{(5-\frac{36s^2}{27-2s^2}-\frac{32\omega^2}{1-4\omega^2})^2-4(4-\frac{144s^2}{27-2s^2}-\frac{32\omega^2}{1-4\omega^2})}}{2}.\\	
	 		 		 		 		 		 		 			\end{split}
	 		 		 		 		 		 		 			\ee
	 		 		 		 	
	 		 		 		 	2) $1/C\leq|\hat \sigma|\leq C$.\\
	 							For the unperturbed case, we know (see Section 
	 							\ref{s:GLcompare} for details) that $\Re\lambda_j\leq\eta<0$ if $4-\frac{144s^2}{27-2s^2}-\frac{32\omega^2}{1-4\omega^2}>0$. Since $\hat\sigma$ belongs to the compact interval, we deduce that $\Re\lambda_j\leq\hat\eta<0$.\\
	 		 		 		 	3) $|\hat \sigma|>>1$. It follows from formulas \eqref{larges} and \eqref{troots} that the roots $\lambda_1$, $\lambda_2$ and $\lambda_3$ are controlled by $-\sigma^2$, $-4\sigma^2$ and $-4\sigma^2$ respectively.

\end{proof}

\section{Comparison with modified Ginzburg--Landau approximation}\label{s:GLcompare}
In this section we study in more detail the various operations in the modified Ginzburg--Landau expansion, 
showing that, after natural preconditioning passes on each side, these can be matched step by step
with those of the exact Lyapunov--Schmidt reduction procedure.
This gives a deeper explanation why the two procedures give the same expansion to their common order of approximation.
In the process, we carry out the proof of Theorem \ref{dispthm}.

\subsection{Modified Ginzburg--Landau derivation through multiscale expansion}\label{s:GLexp}
Following \cite{MC00}, we start by deriving in detail the modified Ginzburg--Landau system.
The derivation is based on the ansatz
\be 
\begin{split}
	& u(t,x)\approx U_A(\hat t,\hat x)=\frac{1}{2}\eps A(\hat t,\hat x)e^{ix}+c.c.+\eps^2B(\hat t,\hat x)+\frac{1}{2}\eps^2e^{2ix}C(\hat t,\hat x)+c.c.+h.o.t.,
\end{split}
\ee
where $(\hat t,\hat x)=(\eps^2 t,\eps x)$.

Substituting this ansatz into \eqref{sh1c} and collecting terms of the form $\eps^{j_1}e^{i\frac{1}{2}{j_2}x}$, we arrive at the equations:
\be 
\begin{split}\label{steps}
	\eps e^{ix} :\quad 0=	& -\frac{1}{2}A\partial_x^2\big[ -(1+\partial_x^2)^2  \big]e^{ix},\\
	\eps^2e^{0ix} :\quad 0=	&\frac{1}{2}s|A|^2\partial_x^2(e^{0ix}),\\
	\eps^2 e^{ix} :\quad 0=	& -\frac{1}{4}\partial_{\hat x}A\big(4\partial_x\big[ -(1+\partial_x^2)^2  \big]e^{ix}+8\partial_x\big[ -(1+\partial_x^2)\partial_x^2  \big]e^{ix}\big),\\
	\eps^2e^{2ix} :\quad 0=&\frac{1}{2}(-36 Ce^{2ix}-2s A^2e^{2ix}),\\
	\eps^3e^{ix} :\quad \partial_{\hat t} A e^{ix}=& A e^{ix}+4\partial^2_{\hat x} A  e^{ix}+(-2sAB+\frac{s^2}{18}|A|^2A) e^{ix}-\frac{3}{4}|A|^2A e^{ix},\\
	\eps^4e^{0ix} :\quad \partial_{\hat t} B=&\partial^2_{\hat x} B+ \frac{1}{2} s\partial^2_{\hat x}(|A|^2).
\end{split}
\ee

Hence, we arrive at the modified Ginzburg--Landau system:
\be 
\begin{split}\label{GL1}
	\partial_{\hat t} A =& 4\partial^2_{\hat x} A+A  -\frac{27-2s^2}{36}|A|^2A-2sAB,\\
	\partial_{\hat t} B=&\partial^2_{\hat x} B+ \frac{1}{2} s\partial^2_{\hat x}(|A|^2).
\end{split}
\ee

Note that \eqref{GL1} has the explicit solution:
\be 
\begin{split}\label{GL2}
	A_{\omega,s}(\hat x)=6\sqrt{\frac{1-4\omega^2}{27-2s^2}}e^{i\omega\hat x},\,\,B(\hat x)=0,\,\,\omega\in[-\frac{1}{2},\frac{1}{2}],\,\,s\in (-\sqrt{27/2},\sqrt{27/2}).
\end{split}
\ee
Hence, if we go through the steps in formulas \eqref{steps}-\eqref{GL2}, replace $\partial_{\hat x}$ by $i\omega$, $B(\hat x)$ by $0$ and ignore $\hat t $ dependence of $A$, we will arrive at the following equation for $A$:
$$
-4\omega^2A+A-\frac{27-2s^2}{36}|A|^2A=0.
$$

\begin{remark}
	Note that all coefficients in the first equation of \eqref{GL1} can be been set to unity by rescaling $\hat t,\hat x, A$ and $B$.
\end{remark}

\subsection{Existence: exact theory vs. modified Ginzburg--Landau approximation}
Now that we know the precise scaling in the existence part we use the following ansatz to go through the existence steps and compare them to the steps of the modified Ginzburg--Landau derivation:
$$
 u=\alpha\cos \xi\eps +V(\eps,\alpha).
$$
We substitute this ansatz into the equation
\be 
\begin{split}\label{exist}
& (I-Q)N(\eps,k(\eps, \omega), u)=0.
\end{split}
\ee

1.) First of all, it is clear that $V(0,\alpha)=0$.\\
2.) Now, we differentiate  equation \eqref{exist} with respect to $\eps$.
\be\label{eps}
\begin{split}
	& \partial_{\eps}(I-Q)N(\eps,k, \alpha\eps U_1+ V)=(I-Q)\big\{\big(-2\omega\partial_\xi^2[ -(1+k^2\partial_\xi^2)^2+\eps^2]-k^2\partial_\xi^2[ -4\omega(1+k^2\partial_\xi^2)\partial_\xi^2+2\eps]\big) u\\
	&-k^2\partial_\xi^2\big( -(1+k^2\partial_\xi^2)^2+\eps^2-(2s u+ 3u^2)\big)(\alpha U_1+\partial_{\eps}V)  \big\}=0,
\end{split}
\ee
where $u=\alpha\eps U_1 + V$.\\
Hence, by step 1.),
\be
\begin{split}
 (I-Q)L_{per}\partial_{\eps}V|_{\eps=0}=-\alpha(I-Q)L_{per}U_1=0.
 \end{split}
 \ee
 Since $(I-Q)L_{per}(I-P)$ is invertible, $\partial_{\eps}V|_{\eps=0}=0$.

3.) Next, we would like to compute $\partial^2_{\eps}V|_{\eps=0}$.  Differentiating \eqref{eps} with respect to $\eps$, we obtain
%
\be\label{eps2}
\begin{split}
	& \partial^2_{\eps}(I-Q)N(\eps,k, \alpha\eps U_1+ V)=(I-Q)\big\{\big(-4\omega\partial_\xi^2[ -4\omega(1+k^2\partial_\xi^2)\partial_\xi^2+2\eps]-k^2\partial_\xi^2[ -8\omega^2\partial_\xi^4+2]\big) u\\
		&\big(-2\omega\partial_\xi^2[ -2(1+k^2\partial_\xi^2)^2+2\eps^2-(2s u+ 3u^2)]-k^2\partial_\xi^2[ -8\omega(1+k^2\partial_\xi^2)\partial_\xi^2+4\eps\\
		&-(2s + 6u)(\alpha U_1+\partial_{\eps}V)]\big)(\alpha U_1+\partial_{\eps}V)-k^2\partial_\xi^2\big( -(1+k^2\partial_\xi^2)^2+\eps^2-(2s u+ 3u^2)\big)\partial^2_{\eps}V  \big\}=0,
\end{split}
\ee
Or,
\be
\begin{split}
	& \partial^2_{\eps}(I-Q)N(\eps,k, \alpha\eps U_1+ V)_{\eps=0}=(I-Q)\big\{
		\big(-2\omega\partial_\xi^2[ -2(1+\partial_\xi^2)^2]-\partial_\xi^2[ -8\omega(1+\partial_\xi^2)\partial_\xi^2\\
		&-2s\alpha U_1]\big)(\alpha U_1)-\partial_\xi^2\big( -(1+\partial_\xi^2)^2\big)\partial^2_{\eps}V  \big\}=0,
\end{split}
\ee
Note that 
\be
\begin{split}
    & s\alpha^2(I-Q)\partial_\xi^2(1)=0,\\
	& -\alpha(I-Q)
		\big(4\omega\partial_\xi^2[ -(1+\partial_\xi^2)^2]+8\omega\partial_\xi^2[ -(1+\partial_\xi^2)\partial_\xi^2
		]\big) U_1=0.
\end{split}
\ee
Therefore, we obtain
\ba \label{formula1^2}
	(I-Q)L_{per}\partial^2_{\eps}V|_{\eps=0}& =4s\alpha^2\cos 2\xi.
\ea
Since $span\{\cos 2\xi\}$ is an invariant subspace for the invertible operator $(I-Q)L_{per}(I-P)$, $\frac{1}{2}\partial^2_{\eps}V|_{\eps=0}$ is of the form $C\cos 2\xi$ and have the following equation for $C$:
$$
\begin{aligned}
	-36C\cos 2\xi& =2s\alpha^2\cos 2\xi.
\end{aligned}
$$

So far, we have shown that 
\be
\begin{split}\label{uu}
	&u=\alpha\cos \xi\eps -\frac{1}{18}s\alpha^2\cos 2\xi\eps^2+\mathcal O(\eps^3).
\end{split}
\ee
Also, notice that in formula \eqref{uu} $\mathcal O(\eps^3)=\mathcal O(\a\eps^3)$. Then
\be
\begin{split}\label{u}
	&u=\alpha\cos \xi\eps -\frac{1}{18}s\alpha^2\cos 2\xi\eps^2+\mathcal O(\a\eps^3).
\end{split}
\ee

In order to obtain the reduced system, we plug \eqref{u} into the first equation from \eqref{projection equations}, obtaining
$$
\begin{aligned}
& \tilde QN(\eps,k(\eps), u)=-k^2\tilde Q\partial_\xi^2\big[ -(1+k^2\partial_\xi^2)^2 u+\eps^2 u-s u^2- u^3  \big] =0.
\end{aligned}
$$

Hence, we have the reduced system:
\be
\begin{split}
-4\omega^2\alpha\eps^3+\a\eps^3 -\frac{27-2s^2}{36}\a^3\eps^3+\mathcal O(\a\eps^4)=0.
\end{split}
\ee
It is equivalent to 
\be
\begin{split}
	-4\omega^2\alpha+\a -\frac{27-2s^2}{36}\a^3+\mathcal O(\a\eps)=0.
\end{split}
\ee

Note that, under the imposed scaling, the computations of the reduced (existence) equation derived by Lyapunov--Schmidt
reduction agree at each order with derived by the formal modified Ginzburg--Landau approximation.
\subsection{Stability: exact vs. modified Ginzburg--Landau linearized dispersion relations}\label{s:ccred}
Next, we derive the linearized dispersion relations for the modified Ginzburg--Landau system:
\be 
\begin{split}
	\partial_{\hat t} A =& 4\partial^2_{\hat x} A+A  -\frac{27-2s^2}{36}|A|^2A-2sAB,\\
	\partial_{\hat t} B=&\partial^2_{\hat x} B+ \frac{1}{2} s\partial^2_{\hat x}(|A|^2).
\end{split}
\ee

In order to study the linearized stability of 	$A_{\omega,s}(\hat x)=6\sqrt{\frac{1-4\omega^2}{27-2s^2}}e^{i\omega\hat x},\,\,B(\hat x)=0,\,\,\omega\in[-\frac{1}{2},\frac{1}{2}],\,\,s\in (-\sqrt{27/2},\sqrt{27/2})$, we would like to derive the linearized equation for the model \eqref{sh1c} around $A_{\omega,s}(\hat x)$ using the equation 

\be
\begin{split}\label{B1}
\partial_{ t}B
	& =-\partial_{ x}^2\big[ -(1+\partial_{ x}^2)^2+\eps^2-2s\tilde u-3\tilde u^2) \big]B,
\end{split}
\ee
where
\be 
\begin{split}
&\tilde u=U_A(\hat t,\hat x)=\frac{1}{2}\eps A_{\omega,s}(\hat x)e^{ix}+c.c.+\frac{1}{2}\eps^2C(\hat x)e^{2ix}+c.c.+h.o.t.,\\
&C(\hat x)=-\frac{sA^2_{\omega,s}(\hat x)}{18},
\end{split}
\ee
and the ansatz
\be 
\begin{split}
	&\B(\hat t,\hat x)=b_re^{i(\omega\hat x+x)}-ib_ie^{i(\omega\hat x+x)}+c.c.+\eps b_z+\eps e^{i(\omega\hat x+2x)}(\Phi_2(\hat t,\hat x)-i\tilde\Phi_2(\hat t,\hat x)+c.c.)+h.o.t.
\end{split}
\ee
	
Substituting this ansatz into \eqref{B} and collecting terms of the form $(-i)^{j_1}\eps^{j_2}e^{i(\omega\hat x+x)}$, we arrive at the equations:

\be 
\begin{split}
	e^{i(\omega\hat x+x)}:\quad 0=	& -b_re^{i\omega\hat x}\partial_{ x}^2\big[ -(1+\partial_{ x}^2)^2]e^{ix},\\
	-ie^{i(\omega\hat x+x)}:\quad 0=	& -b_ie^{i\omega\hat x}\partial_{ x}^2\big[ -(1+\partial_{ x}^2)^2]e^{ix},\\
	\eps e^{i(\omega\hat x+2x)} :\quad 0
		=&-36\Phi_2-24s\sqrt{\frac{1-4\omega^2}{27-2s^2}}b_r,\\
	-i\eps e^{i(\omega\hat x+2x)} :\quad 0
			=&-36\tilde\Phi_2-24s\sqrt{\frac{1-4\omega^2}{27-2s^2}}b_i.\\
\end{split}
\ee

Therefore, $\Phi_2=-\frac{2s}{3}\sqrt{\frac{1-4\omega^2}{27-2s^2}}b_r$ and $\tilde\Phi_2=-\frac{2s}{3}\sqrt{\frac{1-4\omega^2}{27-2s^2}}b_i$.\\
We also arrive at the compatibility conditions:
\be 
\begin{split}
	\eps^2 e^{i(\omega\hat x+x)}:\quad& \partial_{\hat t}b_r=	4\partial^2_{\hat x}b_r-4
		\omega^2b_r+8\omega\hat
		\partial_{\hat x}b_i+(1-\frac{54(1-4\omega^2)}{27-2s^2}-(1-4\omega^2)+\frac{4s^2(1-4\omega^2)}{27-2s^2})b_r\\
		&-12s\sqrt{\frac{1-4\omega^2}{27-2s^2}}b_z,\\
	-i\eps^2 e^{i(\omega\hat x+x)}:\quad& \partial_{\hat t}b_i=-8\omega\hat
				\partial_{\hat x}b_r+	4\partial^2_{\hat x}b_i-4
			\omega^2b_i+(1-\frac{54(1-4\omega^2)}{27-2s^2}+(1-4\omega^2)+\frac{4s^2(1-4\omega^2)}{27-2s^2})b_i,\\
			\eps^3:\quad& \partial_{\hat t}b_z=6s\sqrt{\frac{1-4\omega^2}{27-2s^2}}\partial^2_{\hat x}b_r+\partial^2_{\hat x}b_z.
\end{split}
\ee

Hence, the spectral matrix of the linearized operator is of the form
\be\label{gldisp}
\begin{split}
	0
	&=\bp -4\hat\sigma^2-2(1-4\omega^2)-\hat\lambda & 8i\omega\hat\sigma &-12s\sqrt{\frac{1-4\omega^2}{27-2s^2}}\\ -8i\omega\hat\sigma & -4\hat\sigma^2-\hat\lambda&0\\
	-6s\sqrt{\frac{1-4\omega^2}{27-2s^2}}\hat\sigma^2&0&-\hat\sigma^2-\hat\lambda \ep\bp \b_1 \\ \b_2\\\b_3 \ep.
\end{split}
\ee

Then, for $|\hat\sigma|<<1$
	\be\label{eGL}
	 		 		 \begin{split}
	 		 		 	&\hat\lambda_{1}(\hat\sigma)=-2(1-4\omega^2)+(-\frac{36s^2}{27-2s^2}-\frac{4(1+4\omega^2)}{1-4\omega^2})\hat\sigma^2+\mathcal O(|\hat\sigma|^3),\\
	 		 		 	&\hat\lambda_{2}(
	 		 		 	\hat\sigma)=\lambda_-\hat\sigma^2
	 		 		 	+\mathcal O(|\hat\sigma|^3),\\
	 		 		 	&\hat\lambda_{3}(
	 		 		 	\hat\sigma)=\lambda_+\hat\sigma^2
	 		 		 	+\mathcal O(|\hat\sigma|^3),\\
	 		 		 	&\lambda_{\pm}=\frac{(-5+\frac{36s^2}{27-2s^2}+\frac{32\omega^2}{1-4\omega^2})\pm\sqrt{(5-\frac{36s^2}{27-2s^2}-\frac{32\omega^2}{1-4\omega^2})^2-4(4-\frac{144s^2}{27-2s^2}-\frac{32\omega^2}{1-4\omega^2})}}{2}.
	 		 		 \end{split}
 		 		 			\ee

Note that, as in the existence part, the derivation by rigorous Lyapunov--Schmidt reduction, in the Ginzburg--Landau scaling, 
agrees at each step/order with that by formal modified Ginzburg--Landau approximation as can be seen below.

Now that we know the explicit form of solution in the existence part we use the scaling $\lambda=\eps^2\hat\lambda, \,\sigma=\eps\hat \sigma$ to go through the spectral matrix steps and compare them to the steps in section 5.\\
We now consider the eigenvalue problem of $B(\eps, \omega, s,\sigma)$:
\be
0=\Big[ B(\eps, \omega, s,\hat\sigma\eps)-\hat\l\eps^2 I \Big] W.
\ee

Now, in order to use Lyapunov--Schmidt reduction, we first define the zero eigenprojection
\be \label{projection P}
\hat Qu= \langle  U_1, u\rangle U_1+\langle  U_2, u\rangle U_2+\frac{1}{2\eps^2}\langle  \eps U_3, u\rangle \eps U_3, \quad \text{where} \quad \langle u, v \rangle = \frac{1}{\pi} \int_0^{2\pi}u \cdot v d\xi,
\ee
and define the mapping
\be \label{projection tilde P}
\tilde { \hat Q}: L^2_{per}([0,2\pi], \RR) \rightarrow \RR^3 ; u \mapsto (\langle U_1, u\rangle, \langle U_2, u\rangle, \frac{1}{2\eps^2}\langle \eps U_3, u\rangle)^{T}
\ee

Next, we decompose $W = \b_1 U_{1}+\b_{2} U_{2}+\b_3\eps U_{3} + \mathcal V$ and we first solve
\be\label{scaling}
\begin{split}
	0
	& =(I-Q)\Big[ B(\eps, \omega, s,\hat\sigma\eps)-\hat\l\eps^2 I\Big] (\b_1 U_{1}+\b_{2} U_{2}+\b_3\eps U_{3}  + \mathcal V),
\end{split}
\ee
It is clear that the relation between $\b$ and $\mathcal V$ is linear. Then, let $\mathcal V=\mathcal V_1\b_1+\mathcal V_2\b_2+\mathcal V_3\b_3$. Now let us find asymptotic expansions of $\mathcal V_1$, $\mathcal V_2$ and $\mathcal V_3$ with respect to parameter $\eps$.\\
1.) First, we compute $\mathcal V_i|_{\eps=0}$. \\
\be
\begin{split}
	0
	& =(I-\hat Q) B(0,\omega,s,0) (\b_1 (U_{1}+\mathcal V_1|_{\eps=0})+\b_{2} (U_{2}+\mathcal V_2|_{\eps=0})+\b_{3} \mathcal V_3|_{\eps=0}).
\end{split}
\ee

Since $(I-\hat Q) B(0,\omega,s,0) (\b_1 U_{1}+\b_{2} U_{2})=0$ and $(I-\hat Q) B(0,\omega,s,0)(I-P)$ is invertible, $\mathcal V_i|_{\eps=0}=0$.

2.) Now, we differentiate  equation \eqref{scaling} with respect to $\eps$ and plug in $0$ for $\eps$.
\be
\begin{split}
	0
	& =(I-\hat Q)\big\{\partial_{\eps}B(0,\omega,s,0) (\b_1 U_{1}+\b_{2} U_{2})+ B(0,\omega,s,0)(\b_1 \partial_{\eps}\mathcal V_1|_{\eps=0}+\b_{2} \partial_{\eps}\mathcal V_2|_{\eps=0}+\b_{3} \partial_{\eps}\mathcal V_3|_{\eps=0})\big\}.
\end{split}
\ee
Or,
\be\label{derPhi}
\begin{split}
	0
	& =(I-\hat Q)\big\{-\partial_{ \xi}^2\{-12s\sqrt{\frac{1-4\omega^2}{27-2s^2}}\cos\xi(\b_1 U_{1}+\b_{2} U_{2})\}+ L_{per}(\b_1 \partial_{\eps}\mathcal V_1|_{\eps=0}+\b_{2} \partial_{\eps}\mathcal V_2|_{\eps=0}+\b_{3} \partial_{\eps}\mathcal V_3|_{\eps=0})\big\}\\
	&=-24s\sqrt{\frac{1-4\omega^2}{27-2s^2}}(\b_1\cos2\xi+\b_2\sin2\xi)+(I-\hat Q)L_{per}(\b_1 \partial_{\eps}\mathcal V_1|_{\eps=0}+\b_{2} \partial_{\eps}\mathcal V_2|_{\eps=0}+\b_{3} \partial_{\eps}\mathcal V_3|_{\eps=0}).
\end{split}
\ee
Therefore,

\be\label{Phi}
\begin{split}
	&\b_1 \partial_{\eps}\mathcal V_1|_{\eps=0}=\Phi_2\cos2\xi,\\
	&\b_{2} \partial_{\eps}\mathcal V_2|_{\eps=0}= \tilde\Phi_2 \sin2\xi,\\
	&\b_{3} \partial_{\eps}\mathcal V_3|_{\eps=0}=0.
\end{split}
\ee

Substituting \eqref{Phi} into \eqref{derPhi} leads to
\be
\begin{split}
	0
	&=-24s\sqrt{\frac{1-4\omega^2}{27-2s^2}}\b_1-36\Phi_2,\\
	0
		&=-24s\sqrt{\frac{1-4\omega^2}{27-2s^2}}\b_2-36\tilde\Phi_2.
\end{split}
\ee

Hence, $W = \b_1 U_{1}+\b_{2} U_{2}+\b_{3}\eps U_{3} + \mathcal V=(U_1-\frac{2s}{3}\sqrt{\frac{1-4\omega^2}{27-2s^2}}\cos2\xi\eps+\mathcal O(\eps^2) )\b_{1}+ (U_{2}-\frac{2s}{3}\sqrt{\frac{1-4\omega^2}{27-2s^2}} \sin2\xi\eps+\mathcal O(\eps^2) )\b_{2}+(\eps U_{3} + \mathcal O(\eps^2))\b_{3}$. Next, we plug $W$ into 
\be
\begin{split}
	0
	& =\tilde {\hat Q}\Big[ B(\eps, \omega, s,\hat\sigma\eps)-\hat\l\eps^2 I\Big] W.
\end{split}
\ee
Then we arrive at
\be 
\begin{split}
	0=	& -\hat\lambda\eps^2\b_1-(4\hat\sigma^2+4
	\omega^2)\eps^2\b_1+8i\omega\hat\sigma
	\eps^2\b_2+(1-\frac{54(1-4\omega^2)}{27-2s^2}-(1-4\omega^2)+\frac{4s^2(1-4\omega^2)}{27-2s^2})\eps^2\b_1\\
	&-12s\sqrt{\frac{1-4\omega^2}{27-2s^2}}\eps^2\b_3,\\
	0=	&-8i\omega\hat\sigma
	\eps^2\b_1 -\hat\lambda\eps^2\b_2-(4\hat\sigma^2+4
	\omega^2)\eps^2\b_2+(1-\frac{54(1-4\omega^2)}{27-2s^2}+(1-4\omega^2)+\frac{4s^2(1-4\omega^2)}{27-2s^2})\eps^2\b_2,\\
	0=&-6s\sqrt{\frac{1-4\omega^2}{27-2s^2}}\hat \sigma^2\eps^2\b_1-\hat\lambda\eps^2\b_3-\hat\sigma\eps^2\b_3.
\end{split}
\ee
Or,
\be
\begin{split}
	0
	&=\bp -4\hat\sigma^2-2(1-4\omega^2)-\hat\lambda & 8i\omega\hat\sigma &-12s\sqrt{\frac{1-4\omega^2}{27-2s^2}}\\ -8i\omega\hat\sigma & -4\hat\sigma^2-\hat\lambda&0\\
	-6s\sqrt{\frac{1-4\omega^2}{27-2s^2}}\hat\sigma^2&0&-\hat\sigma^2-\hat\lambda \ep\bp \b_1 \\ \b_2\\\b_3 \ep\eps^2+O(\eps^3).
\end{split}
\ee

This yields in passing agreement of the critical linearized dispersion relations.

\begin{proof}[Proof of Theorem \ref{dispthm}]
	Note that the exact reduced spectral system given by \eqref{spmatrixs} and Corollary \ref{errcorr}
	agrees after Ginzburg Landau scaling ($\sigma =:\eps \hat \sigma$, $\lambda_j=:\eps^2 \hat \lambda_j$) to appropriate order with the matrix eigenvalue problem \eqref{gldisp}.
	Also, it follows from formulas \eqref{eBM} and \eqref{eGL} that the roots likewise agree to lowest order, 
	giving the result, \eqref{exactdisp}.
	
\end{proof}

{\bf Acknowledgments.}
We are grateful to Prof. K. Zumbrun for many suggestions and helpful discussions.



\begin{thebibliography}{GMWZ7}

\bibitem[BJK16]{BJK16} B. Barker, S. Jung and K. Zumbrun, {\it Turing patterns in parabolic systems of conservation laws
	and numerically observed stability of periodic waves}, preprint.

\bibitem[DSSS]{DSSS}
A. Doelman, B. Sandstede, A. Scheel, and G. Schneider,
{\it The dynamics of modulated wavetrains,}
 Mem. Amer. Math. Soc.  199  (2009),  no. 934, viii+105 pp. ISBN: 978-0-8218-4293-5.






\bibitem[C]{C} M. Cross,
{\it Notes on the Turing Instability and Chemical Instabilities,} unpublished lecture notes, 
http://www.cmp.caltech.edu/\~mcc/BNU/Notes7\_2.pdf.

\bibitem[CH93]{CH93} M.C. Cross  and P.C. Hohenberg, {\it Pattern formation outside of equilibrium},
 Rev. Mod. Phys. 65, 851--1112.



\bibitem[CL]{CL} P. Chossat and R. Lauterbach, {\it Methods in Equivariant Bifurcations and Dynamical Systems,} Advanced series in nonlinear dynamics, World Scientific, 2000.

\bibitem[E]{E} W. Eckhaus, {\it Studies in nonlinear stability theory,} Springer tracts in Nat. Phil. Vol. 6, 1965.

\bibitem[EV87]{EV87} D.E. Edmunds, W.D. Evans, {\it Spectral Theory and Differential Operators,}  Oxford Mathematical Monographs. Oxford Science Publications. The Clarendon Press, Oxford University Press, New York, 1987.


\bibitem[GS]{GS} M. Golubitsky and D. Schaeffer,
{\it Singularities and Groups in Bifurcation Theory,} Volume I, Applied Mathematical Sciences 51, Springer--Verlag New York, 1985.
\bibitem
[JNRZ1]{JNRZ1} M. Johnson, P. Noble, L.M. Rodrigues,  and K. Zumbrun,
{\it Nonlocalized modulation of periodic reaction diffusion waves:
Nonlinear stability,}
Arch. Ration. Mech. Anal. 207 (2013), no. 2, 693--715. 

\bibitem
[JNRZ2]{JNRZ2} M. Johnson, P. Noble, L.M. Rodrigues,  and K. Zumbrun,
{\it Nonlocalized modulation of periodic reaction diffusion waves:
The Whitham equation,}
 Arch. Ration. Mech. Anal. 207 (2013), no. 2, 669--692. 

\bibitem[JZ]{JZ} M. Johnson and K. Zumbrun,
{\it Nonlinear stability of spatially-periodic traveling-wave solutions of systems of reaction-diffusion equations,}
Ann. Inst. H. Poincaré Anal. Non Linéaire 28 (2011), no. 4, 471--483. 




\bibitem[MC00]{MC00} P. C. Matthews and S. M. Cox, 
{\it Pattern formation with a conservation law,} Nonlinearity 13 (2000), no. 4, 1293--1320.


\bibitem[M1]{M1} A. Mielke, 
{\it A new approach to sideband-instabilities using the principle of 
reduced instability,}
 Nonlinear dynamics and pattern formation in the natural environment 
(Noordwijkerhout, 1994), 206--222, 
Pitman Res. Notes Math. Ser., 335, Longman, Harlow, 1995. 

\bibitem[M2]{M2} A. Mielke, 
{\it Instability and stability of rolls in the Swift-Hohenberg equation,}
Comm. Math. Phys. 189 (1997), no. 3, 829--853. 

\bibitem[M3]{M3} A. Mielke, 
{\it The Ginzburg-Landau equation in its role as a modulation equation,}
Handbook of dynamical systems, Vol. 2, 759--834, 
North-Holland, Amsterdam, 2002.


\bibitem[NW]{NW} A.C. Newell and J. Whitehead, {\it Finite bandwidth, finite amplitude
	convection,} J. Fluid Mech. 39 (1969) 279--303.







\bibitem[RS80]{RS80}  M. Reed, B. Simon,  {\it Methods of modern mathematical physics. I. Functional analysis,} Second edition. Academic Press, Inc., New York, 1980.



\bibitem[SSSU]{SSSU}
B. Sandstede, A. Scheel, G. Schneider, and H. Uecker, 
{\it Diffusive mixing of periodic wave trains in reaction-diffusion systems,}
J. Diff. Eq. 252 (2012), no. 5, 3541--3574. 

\bibitem[S1]{S1} G. Schneider, 
{\it Diffusive stability of spatial periodic solutions of the Swift-Hohenberg equation,}
Commun. Math. Phys. 178, 679--202 (1996). 

\bibitem
[S2]{S2} G. Schneider, {\it Nonlinear diffusive stability
of spatially periodic solutions-- abstract theorem and higher space
dimensions},
Proceedings of the International Conference on Asymptotics
in Nonlinear Diffusive Systems (Sendai, 1997),  159--167,
Tohoku Math. Publ., 8, Tohoku Univ., Sendai, 1998.


\bibitem[SZJV16]{SZJV16} A. Sukhtayev, K. Zumbrun, S. Jung and R. Venkatraman, {\it Diffusive stability of spatially periodic solutions of the Brusselator model}, preprint,




\bibitem[T]{T} A. Turing, 
{\it The chemical basis of morphogenesis,} Philos. Trans. Roy. Soc. Ser. B 237 (1952) 37--72.

\end{thebibliography}
\end{document}